\newtheorem{theorem}{Theorem}
\newtheorem{lemma}[theorem]{Lemma}
\newtheorem{proposition}[theorem]{Proposition}
\newtheorem{corollary}[theorem]{Corollary}
\newtheorem*{definition*}{Definition}
\numberwithin{equation}{section}
\numberwithin{theorem}{section}
\title[On sumsets of nonbases of maximum size]%
  {On sumsets of nonbases of maximum size}
\author{B\'ela Bajnok}
\email{bbajnok@gettysburg.edu}
\address{Department of Mathematics, Gettysburg College, Gettysburg, PA 17325, USA}
\author{P\'eter P\'al Pach}
\email{ppp@cs.bme.hu}
\address{Department of Computer Science and Information Theory, Budapest University of Technology and Economics, M\H{u}egyetem rkp. 3., H-1111 Budapest, Hungary; MTA-BME Lend\"ulet Arithmetic Combinatorics Research Group,
  ELKH, M\H{u}egyetem rkp. 3., H-1111 Budapest, Hungary.}
\thanks{}
\begin{document}

\begin{abstract}  
Let $G$ be a finite abelian group.  A nonempty subset $A$ in $G$ is called a basis of order $h$ if $hA=G$; when $hA \neq G$, it is called a nonbasis of order $h$.  Our interest is in all possible sizes of $hA$ when $A$ is a nonbasis of order $h$ in $G$ of maximum size; we provide the complete answer when $h=2$ or $h=3$.     	
\end{abstract}

\date{\today}
\maketitle

2020 AMS MSC:  Primary: 11B13; Secondary: 05B10, 11P70, 11B75, 20K01.

Key words: Abelian group, sumset, basis, critical number.

\section{Introduction}

Let $G$ be a finite abelian group of order $n \geq 2$, written in additive notation.  For a positive integer $h$, the {\em Minkowski sum} of nonempty subsets $A_1, \ldots, A_h$ of $G$ is defined as 
$$A_1+  \cdots + A_h = \{ a_1+ \cdots + a_h  \; : \; a_1 \in A_1, \ldots, a_h \in A_h\}.$$  When $A_1= \cdots =A_h=A$, we simply write $hA$, which then is the collection of sums of $h$ not-necessarily-distinct elements of $A$.  

We say that a nonempty subset $A$ of $G$ is {\em $h$-complete} (alternatively, a {\em basis of order $h$}) if $hA=G$; while, if $hA$ is a proper subset of $G$, we say that $A$ is {\em $h$-incomplete}.  The {\em $h$-critical number}  $\chi  (G, h) $ of $G$ is defined as the smallest positive integer $m$ for which all $m$-subsets of $G$ are $h$-complete; that is:
$$\chi  (G, h)  =  \min \{m \; : \; A \subseteq G,  |A| \geq m \Rightarrow h  A=G \}.$$
It is easy to see that for all $G$ and $h$ we have $hG=G$, so $\chi  (G, h)$ is well defined.
The value of  ${\chi}  (G, h)$ is now known for every $G$ and $h$---see \cite{Baj:2018a, Baj:2014a}.  

The following question then arises naturally: What can one say about the size of $hA$ if $A$ is an $h$-incomplete subset of maximum size in $G$?  Namely, we aim to determine the set
$$S(G,h)=\{ |hA| \; : \; A \subset G, \; |A|={\chi}  (G, h)-1, \; hA \neq G\}.$$

In this paper we attain the complete answer to this question for $h=2$ and $h=3$.  For $h=2$, we find that the situation is greatly different for groups of even and odd order.

\begin{theorem} \label{intro thm h=2}
Let $G$ be an abelian group of order $n$.  
\begin{enumerate}
  \item 
When $n$ is even, the maximum size of a 2-incomplete subset of $G$ is $n/2$, and the elements of $S(G,2)$ are of the form $n - n/d$ where $d$ is some even divisor of $n$; in fact all such integers are possible, with the exception that $3n/4$ arises only when the exponent of $G$ is divisible by 4.
  \item  
When $n$ is odd, the maximum size of 2-incomplete subsets of $G$ is $(n-1)/2$; furthermore, when $G$ is of order 3, 5, or is noncyclic and of order 9, then $S(G,2)=\{n-2\}$, and for all other groups of odd order we have $S(G,2)=\{n-2,n-1\}$.
\end{enumerate}
\end{theorem}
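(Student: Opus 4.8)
\emph{The plan rests on a single tool, Kneser's addition theorem, applied to the sumset $2A$.} Writing $H=\mathrm{Stab}(2A)=\{g\in G:g+2A=2A\}$, the set $2A$ is a union of $H$-cosets, so $2A\neq G$ forces $|2A|\le n-|H|$, while Kneser gives $|2A|\ge 2|A+H|-|H|$. Combined with the pigeonhole remark that $|A|>n/2$ makes $A\cap(g-A)$ nonempty for every $g$ and hence $2A=G$, and with the fact that every abelian group of even order has an index-$2$ subgroup (itself a $2$-incomplete set of size $n/2$), this already shows the maximum size of a $2$-incomplete set is $\lfloor n/2\rfloor$ (the odd bound being attained by the constructions below), so $\chi(G,2)-1=\lfloor n/2\rfloor$ and one studies $A$ of exactly this size with $2A\neq G$. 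I would organise the rest into the even case, the structural analysis in the odd case, and the constructions.

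For the even case I would take $|A|=n/2$, $2A\neq G$, set $k=|H|$ and $d=n/k$, and apply Kneser twice: once in $G$, where $|2A|\le n-k$ forces $|A+H|\le n/2$ and hence (since $|A+H|\ge|A|=n/2$) that $A$ is a union of $H$-cosets with $|A+H|=n/2$; and once in $G/H$, where $2A/H$ has trivial stabilizer, forcing $|2A/H|=n/k-1$, so $d=2|A+H|/k$ is even and $|2A|=n-k=n-n/d$. For the converse, for each even divisor $d\mid n$ I would pick a subgroup $H$ with $|G/H|=d$, use that $G/H$ has even order (so it has an index-$2$ subgroup $M$), and write an explicit $A_0\subseteq G/H$ of size $d/2$ with $|2A_0|=d-1$: for $d\ge 6$ a set $\{x_0\}\cup(c+(M\setminus\{y_0\}))$, for $d=2$ a singleton, and for $d=4$ the set $\{0,1\}\subseteq\Z_4$ when $G/H\cong\Z_4$; pulling back gives $|2A|=n-n/d$. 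The only value needing care is $3n/4=n-n/4$: it is attainable exactly when $G$ has a $\Z_4$-quotient, i.e.\ when $4\mid\exp(G)$, since otherwise every quotient of order $4$ is $\Z_2^2$, in which every two-element set has sumset of size $2$, and the analysis only produces $|2A|=n/2$.

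For the odd case, with $|A|=(n-1)/2$ and $2A\neq G$, the same sandwich gives $|A+H|=(n-1)/2=|A|$, so $A$ is a union of $H$-cosets and $k=|H|$ divides $(n-1)/2$; since also $k\mid n$ and $\gcd(n,(n-1)/2)=1$, this forces $H=\{0\}$, and then Kneser gives $|2A|\ge n-2$, so $S(G,2)\subseteq\{n-2,n-1\}$. I would obtain $n-2\in S(G,2)$ for all odd $n\ge 3$ from an explicit construction modelled on an interval in $\Z_n$ (in general by peeling off a cyclic direct summand of maximal order and iterating the interval construction in the complement, the recursion going through because $n'-2\in S(G',2)$ always holds); this also exhibits a $2$-incomplete set of size $(n-1)/2$. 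For $n-1$, after translating so the omitted element is $0$ (possible since $n$ is odd), the task is to find a positive system $A$ (one element from each pair $\{g,-g\}$) with $A+A=G\setminus\{0\}$. For $\Z_3,\Z_5,\Z_3^2$ I would show no positive system works by direct inspection (using for $\Z_3^2$ that positive systems are transversals of the four lines through $0$), so $S(G,2)=\{n-2\}$ there; for every other odd-order group I would start from the $|2A|=n-2$ set and perform a local swap---trading one element for its negative across a suitably chosen coset---to recover the last missing sum without opening a new gap.

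I expect the main obstacle to be exactly this last step: constructing, uniformly in $G$, a positive system with $A+A=G\setminus\{0\}$ for every odd-order group except the three exceptions. The natural recursions keep bottoming out at a factor on which the property fails, so the groups with no large cyclic quotient---the elementary abelian groups $\Z_3^k$ with $k\ge 3$ and $\Z_5^k$ with $k\ge 2$---together with the verification that the swap genuinely fails for $\Z_3,\Z_5,\Z_3^2$, will need a dedicated combinatorial argument; for $\Z_3^k$ this amounts to choosing one representative vector per line through the origin so that each chosen vector is a sum of two others. Making the local-swap construction robust across all remaining groups is, I believe, where most of the effort lies.
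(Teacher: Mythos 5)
Your overall strategy is the paper's: Kneser's theorem plus the observation that $A$ must be a union of cosets of the stabilizer $H$ of $2A$, giving $|2A|=n-n/d$ with $d$ even in the even case and $H=\{0\}$, $|2A|\geq n-2$ in the odd case, followed by pullback constructions through quotients and initial-segment/swapped-segment constructions. One place where you genuinely diverge, to your advantage: your exclusion of $3n/4$ when $4\nmid\exp(G)$ is simpler than the paper's. You note that $|2A|=3n/4$ forces $d=4$, so the image of $A$ in the order-$4$ quotient $G/H$ is a two-element set whose double has three elements, which is impossible in $\mathbb{Z}_2\times\mathbb{Z}_2$; hence $G/H\cong\mathbb{Z}_4$ and $4\mid\exp(G)$. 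This is correct and bypasses entirely the paper's Theorem 3.2 (the Gray-code/projection counting argument about index-$2$ subgroups meeting $A$ unevenly), which is the most involved step of the paper's even case. (Minor detail: in your $d\geq 6$ construction $\{x_0\}\cup(c+(M\setminus\{y_0\}))$ you need $x_0$ to lie in $M$, but that is cosmetic.)

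The genuine gap is in the odd case, and you flag it yourself: you never actually produce sets with $|2A|=n-1$ for the elementary abelian groups $\mathbb{Z}_3^k$ with $k\geq 3$ and $\mathbb{Z}_5^k$ with $k\geq 2$. Your swapped-interval idea (the analogue of the paper's ${\mathcal I}^\ast(G,(n-1)/2)$) covers all groups whose exponent is at least $7$, but for exponent $3$ or $5$ the needed ``dedicated combinatorial argument'' is exactly what remains, and without it the assertion $S(G,2)=\{n-2,n-1\}$ is unproved for those families. The paper closes this by exhibiting explicit sets: in $\mathbb{Z}_3^r$, $r\geq 3$, take the first $(n-1)/2$ elements but replace $(1,1,\dots,1,0,2,2)$ by $(1,1,\dots,1,2,0,0)$, so that $2A=\mathbb{Z}_3^r\setminus\{(2,\dots,2)\}$; in $\mathbb{Z}_5^r$, $r\geq 2$, replace $(2,\dots,2,1,4)$ by $(2,\dots,2,3,0)$, so that $2A=\mathbb{Z}_5^r\setminus\{(4,\dots,4)\}$. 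So your plan is salvageable by a local swap of precisely this kind, but as written the construction (and its verification) is missing, whereas leaving the three exceptional groups $\mathbb{Z}_3,\mathbb{Z}_5,\mathbb{Z}_3^2$ to direct inspection is fine --- the paper does the same.
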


For $h=3$ we separate three cases.    
\begin{theorem} \label{intro thm h=3}
Let $G$ be an abelian group of order $n$.  
\begin{enumerate}
  \item 
When $n$ has prime divisors congruent to 2 mod 3, and $p$ is the smallest such prime, the maximum size of a 3-incomplete subset is $(p+1)n/(3p)$, and we have $S(G,3)=\{n-n/p\}$.  
\item When $n$ is divisible by 3 but has no divisors congruent to 2 mod 3, then the maximum size of a 3-incomplete subset is $n/3$, and the elements of $S(G,3)$ are of the form $n - n/d$ or $n-2n/d$ where $d$ is some divisor of $n$ that is divisible by $3$; furthermore, all such integers are possible, with the exceptions of $2n/3$ and $n-2n/d$ when the highest power of 3 that divides $d$ is more than the highest power of 3 that divides the exponent of $G$.
\item In the case when all divisors of $n$ are congruent to $1$ mod 3, then the maximum size of a 3-incomplete subset is $(n-1)/3$, and $S(G,3)=\{n-3,n-1\}$, unless $G$ is an elementary abelian 7-group, in which case $S(G,3)=\{n-3\}$.
\end{enumerate}
\end{theorem}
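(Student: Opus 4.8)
The plan is to treat all three parts uniformly, by induction on $|G|$. The statements about the maximum size of a 3-incomplete subset are mere restatements of $\chi(G,3)-1$, so—using the known value of $\chi(G,3)$—the real content is the determination of $S(G,3)$. Write $n=|G|$, fix a 3-incomplete subset $A\subseteq G$ of maximum size, and let $H=\{g\in G:g+3A=3A\}$ be the stabilizer of $3A$. Then $3A$ is a proper union of $H$-cosets, so $|3A|\le n-|H|$; its image $3\overline A$ in $\overline G:=G/H$ is aperiodic; $3A$ is the full preimage of $3\overline A$, so $|3A|=|H|\cdot|3\overline A|$; Kneser's theorem in $\overline G$ gives $|3\overline A|\ge 3|\overline A|-2$, hence $|3A|\ge 3|A|-2|H|$; and since $\overline A$ is 3-incomplete in $\overline G$ and $|A|\le|H|\cdot|\overline A|$, also $|A|\le|H|\,(\chi(\overline G,3)-1)$.

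\emph{The branch $H\neq\{0\}$.} Here $\overline G=G/H$ is a proper, hence smaller, quotient, and it falls under the same trichotomy: in part (1) the least prime divisor of $|\overline G|$ congruent to $2\pmod 3$, if any, is still $p$, and in part (3) every divisor of $|\overline G|$ is still $\equiv 1\pmod 3$. Substituting the known $\chi(\overline G,3)$ into $|A|\le|H|(\chi(\overline G,3)-1)$ and comparing with $|A|=\chi(G,3)-1$ forces, after a brief arithmetic check: in part (3) a contradiction—so this branch never occurs there and $H=\{0\}$ always; and in parts (1) and (2) that $\overline A$ is a maximum 3-incomplete subset of $\overline G$ (which must then itself be a group of the same type, part-(1) with the same $p$, respectively part-(2)) and that $A$ is the full preimage of $\overline A$, so $|3A|=|H|\cdot|3\overline A|$ with $|3\overline A|\in S(\overline G,3)$ by the inductive hypothesis. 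In part (1) this gives $|3A|=|H|\,\bigl(|\overline G|-|\overline G|/p\bigr)=n-n/p$, the unique element of $S(G,3)$, the base case $\overline G\cong\Z_p$ being handled by Cauchy--Davenport directly (an interval of $(p+1)/3$ elements has triple sumset omitting exactly one point); in part (2) it lifts to a value of the asserted shape $n-n/d$ or $n-2n/d$ with $3\mid d\mid n$.

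\emph{The branch $H=\{0\}$.} Now $|3A|\ge 3|A|-2$, which together with $|3A|\le n-1$ leaves only the two values $3|A|-2,3|A|-1$ in parts (1)--(2) (there $3|A|$ already exceeds $n-1$) and the three values $n-3,n-2,n-1$ in part (3). Which of these occur, and with which sets, I would decide via the structure theory of sumsets of near-minimal size—Vosper's theorem, Kemperman's structure theorem, and $3k-4$-type results—in the form: if $|2A|$ and $|3A|$ are within an absolute constant of $2|A|$ and $3|A|$ and $3A$ is aperiodic, then, up to translation and dilation, $A$ is an interval, an interval with one interior point removed, or a ``coset--interval hybrid'' (a short run of full cosets of some subgroup $K$ followed by an interval inside one further $K$-coset). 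Over $\Z_p$ an interval of $(p-1)/3$ elements gives $|3A|=p-3$ and an interval missing an interior point gives $|3A|=p-1$, while $|3A|=p-2$ is impossible; a coset--interval hybrid reduces the value of $|3A|$ to the same question in $G/K$, where the induction closes. This yields $S(G,3)=\{n-3,n-1\}$ in part (3) and supplies the two ``top'' values $n-1=n-n/n$ and $n-2=n-2n/n$ in part (2), while $2n/3=n-n/3$ is never attained—ultimately because the only 3-incomplete subset of $\Z_3$ is a singleton, whose triple sumset has size $1$ rather than $2$, so the recursion can never furnish the missing factor. For the lower bounds one produces a witness for each listed value: given an admissible $d$, take a subgroup $K$ of index $d$ for which the $3$-part of $\exp(G/K)$ is as large as possible—feasible exactly when $v_3(d)\le v_3(\exp G)$, which is precisely when $n-2n/d$ is claimed—and let $A$ be the full preimage of an interval-minus-interior-point (for $n-n/d$) or of an interval (for $n-2n/d$) in a cyclic subquotient of $G/K$ of the right order, padded at the bottom of the coset tower by a recursively built extremal set; when the required quotient cannot be arranged, a short argument shows the value is unattainable. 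Finally, if $G$ is an elementary abelian $7$-group the hierarchy degenerates: every arithmetic progression in $G$ has at most $7$ terms, so every maximum 3-incomplete subset is a tower of full $\Z_7$-cosets capped by a 3-incomplete subset of a single $\Z_7$, whose triple sumset is a $4$-term progression of size $7-3$; hence no ``interval minus a point'' of the needed length exists and $S(G,3)=\{n-3\}$, the base case $G=\Z_7$ being checked by hand.

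\emph{Where the difficulty lies.} The crux is the $H=\{0\}$ branch over non-cyclic groups: in contrast to $\Z_p$, sumsets of size $\approx 3|A|$ need not come from genuine arithmetic progressions but from the richer Kemperman-type hierarchy of coset--interval hybrids, so this classification—and the bookkeeping of which hybrid yields $n-n/d$ rather than $n-2n/d$—must be carried out carefully and threaded through the induction. The parallel difficulty is the precise combinatorics of part (2): deciding for which pairs $(G,d)$ there is a quotient cyclic enough in its $3$-part to realize $n-2n/d$, i.e.\ pinning down the exact role of $v_3(\exp G)$ and the exclusion of $2n/3$. Both reduce, via the stabilizer dichotomy above, to the base cases $\Z_{3^a}$ and $\Z_3^k$, where they can be settled explicitly.
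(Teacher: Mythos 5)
The skeleton of your argument (Kneser's theorem, passing to the stabilizer $H$ of $3A$, noting that a maximum 3-incomplete $A$ is a union of $H$-cosets, reducing to $G/H$, and building witnesses from initial segments/intervals) is essentially the paper's, and the easy half---the containments of $S(G,3)$ in the candidate lists and the constructions for $n-n/d$, $n-3$, and most $n-2n/d$---would go through. But each of the three hard exclusion steps is missing, not merely compressed. In part (3) with trivial stabilizer you invoke ``Vosper, Kemperman, $3k-4$-type results'' to claim that any $A$ with $|3A|\in\{n-3,n-2,n-1\}$ is, up to translation and dilation, an interval, a punctured interval, or a coset--interval hybrid. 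No such classification exists in this regime: Kemperman's structure theorem covers $|A+B|\le|A|+|B|-1$, whereas here $|3A|$ can equal $3|A|+1$ (namely $n-1$ with $|A|=(n-1)/3$), $3k-4$-type theorems are not available for general abelian groups, and in a non-cyclic $G$ there is no ambient interval/dilation structure to classify against. The paper gets the needed structure by hand: writing $\{x,y\}=G\setminus 3A$, it notes $x-A,\ y-A\subseteq G\setminus 2A$, uses Kneser to bound $|G\setminus 2A|\le |A|+2$, deduces $|A\cap(A+\ell)|\ge |A|-2$ for $\ell=x-y$, so $A$ is a union of $\ell$-progressions, and then runs a delicate case analysis of two disjoint intervals in $\mathbb{Z}_k$, $k=|\langle\ell\rangle|$, to rule out $|3A|=n-2$. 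Nothing in your proposal replaces this computation.

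Second, in part (2) the exclusion of $n-2n/d$ when $\nu_3(d)>\nu_3(\exp G)$ is settled by ``a short argument shows the value is unattainable''; that is precisely the assertion to be proved, and the failure of your preimage construction does not prove non-attainability for arbitrary $A$. (The paper's Claim 4 argument is again via the two missing elements $x,y$ of $3A'$ in $G/H$: it shows $A'$ must be a union of cosets of $K=\langle x-y\rangle$ because $\nu_3(d)>\nu_3(\exp G)$ forces $|K|$ to divide $d/3$, and then $|K|\mid d-2$ gives $|K|\le 2$, a contradiction.) Third, the elementary abelian $7$-group case is asserted rather than argued: ``every arithmetic progression has at most $7$ terms, so every maximum 3-incomplete subset is a tower of full $\mathbb{Z}_7$-cosets capped by a subset of one $\mathbb{Z}_7$'' is not a valid inference, and it is not even the right structure---the extremal sets are towers of \emph{pairs} of cosets $\{a_k,2a_k\}+H_k$ plus two extra elements at the bottom. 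Proving that structure (which is what shows $n-1\notin S(\mathbb{Z}_7^r,3)$ and distinguishes this case) occupies the paper's Theorem 4.5, an induction with counting arguments over pierced lines, affine lines, and flats; your sketch leaves this, arguably the hardest part of the theorem, unsupported.
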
  
We should note that the three cases addressed in Theorem \ref{intro thm h=3} are the same as those used while studying sumfree sets---see \cite{DiaYap:1969a} and \cite{GreRuz:2005a}; in fact, the maximum size of a 3-incomplete set in $G$ agrees with the maximum size of a sumfree set in $G$ when $G$ is cyclic.

Our methods are completely elementary, with Kneser's Theorem as the main tool.  In Section 2 we review some standard terminology and notations and prove some auxiliary results, then in Sections 3 and 4 we prove Theorems \ref{intro thm h=2} and \ref{intro thm h=3}, respectively.

\section{Preliminaries}

Here we present a few generic results that will come useful later.
We will use the following version of Kneser's Theorem.

\begin{theorem}[Kneser's Theorem; \cite{Kne:1953a, Nat:1996a}]
If $A_1, \ldots, A_h$ are nonempty subsets of $G$, and $H$ is the stabilizer subgroup of $A_1+  \cdots + A_h$ in $G$, then
$$|A_1+  \cdots + A_h| \geq |A_1|+ \cdots + |A_h| -(h-1)|H|.$$
\end{theorem}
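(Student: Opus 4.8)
The plan is to reduce the general statement to the two-summand case $h=2$ by induction on $h$, and then to prove that case, which is where all the work lies. For the reduction, suppose $h\geq 3$ and that the inequality holds for any $h-1$ nonempty subsets. Set $B=A_1+\cdots+A_{h-1}$ and let $K$ be the stabilizer of $B$; the inductive hypothesis gives $|B|\geq |A_1|+\cdots+|A_{h-1}|-(h-2)|K|$. Apply the $h=2$ case to the pair $B,A_h$: if $H$ is the stabilizer of $B+A_h=A_1+\cdots+A_h$, then $|A_1+\cdots+A_h|\geq |B|+|A_h|-|H|$. Since any element fixing $B$ also fixes $B+A_h$, we have $K\subseteq H$ and hence $|K|\leq|H|$, so combining the two bounds yields $|A_1+\cdots+A_h|\geq |A_1|+\cdots+|A_h|-(h-2)|K|-|H|\geq |A_1|+\cdots+|A_h|-(h-1)|H|$, as wanted. (The same formula, read with $h=2$, is a tautology, so it really is only the two-summand inequality that needs proof.)

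For $h=2$, given nonempty finite $A,B\subseteq G$ with $H$ the stabilizer of $A+B$, the engine is the Dyson (or $e$-)transform: for $e\in G$ put $A_e=A\cup(B+e)$ and $B_e=B\cap(A-e)$. Two facts follow from inclusion–exclusion and a one-line check: whenever $e\in A-B$ (equivalently $B_e\neq\varnothing$) one has $|A_e|+|B_e|=|A|+|B|$, and $A_e+B_e\subseteq A+B$ in all cases. Thus the transform shifts size from $B$ onto $A$ without enlarging the sumset, and the strategy is to iterate it so as to drive the smaller set down, governed by the following dichotomy.

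Either $B$ meets only one coset of $H$, say $B\subseteq b_0+H$: then $A+B=(A+H)+b_0$, so $|A+B|=|A+H|\geq|A|$, while $B\subseteq b_0+H$ forces $|B|\leq|H|$ and hence $|A|+|B|-|H|\leq|A|$, which settles this case. Or else $B$ meets at least two cosets of $H$; then one can choose $e\in A-B$ with $\varnothing\neq B_e\subsetneq B$, because if no such $e$ existed we would have $B+e\subseteq A$ for every $e\in A-B$, forcing $(B-B)+A\subseteq A$, so that $A$ is a union of cosets of $K:=\langle B-B\rangle$, whence $A+B=A+b_0$, $H\supseteq K$, and $B\subseteq b_0+K\subseteq b_0+H$ lies in a single $H$-coset, a contradiction. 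In this second case one applies the transform and recurses on $(A_e,B_e)$, where $|B_e|<|B|$.

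The main obstacle is making this recursion legitimate: after a transform $A_e+B_e$ is only a \emph{subset} of $A+B$, so its stabilizer $H_e$ need not be contained in, or even comparable with, $H$, and a plain induction on $|B|$ fails to close. The standard remedy is to set the argument up as a minimal counterexample with respect to a carefully chosen complexity measure (minimizing $|A+B|$ first, then an auxiliary quantity), using throughout that $A+B$ is a union of $H$-cosets; equivalently, one may first replace $A,B$ by $A+H,B+H$ and pass to $G/H$, reducing everything to the core assertion that $|A+B|\geq|A|+|B|-1$ whenever the stabilizer of $A+B$ is trivial. That trivial-stabilizer statement is what the $e$-transform induction is ultimately used to prove, and getting its bookkeeping exactly right is the crux of the whole argument.
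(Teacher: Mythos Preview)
The paper does not prove Kneser's Theorem; it merely states it with a citation to \cite{Kne:1953a, Nat:1996a} and uses it as a black box. So there is no ``paper's own proof'' to compare against.

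That said, your outline is the standard one (essentially what is in Nathanson's book): reduce to $h=2$ by induction using $K\subseteq H$, then attack $h=2$ with the Dyson $e$-transform and a dichotomy on whether $B$ lies in a single $H$-coset. The reduction step and the two transform identities are correct, and your argument that if no shrinking $e$ exists then $B$ lies in a single coset of $\langle B-B\rangle\subseteq H$ is sound. You have also correctly identified the real difficulty: after a transform the stabilizer of $A_e+B_e$ may be strictly larger than $H$, so one cannot simply induct on $|B|$. Your proposed remedies (minimal counterexample with $|A+B|$ minimized first, or pass to $G/H$ and prove the aperiodic version $|A+B|\geq|A|+|B|-1$) are exactly the standard ones, but you have stopped short of actually carrying either out. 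As written this is an accurate roadmap rather than a proof; the ``bookkeeping'' you defer is not purely mechanical and is where most expositions spend their effort.
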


%\begin{theorem}[Kneser's Theorem; \cite{Kne:1953a, Nat:1996a}]
%Suppose that $A$ is a subset of a finite abelian group $G$, $h$ is a positive integer, and $H$ is the stabilizer subgroup of $hA$ in $G$.  We then have
%$$|hA| \geq h|A+H|-(h-1)|H|.$$
%\end{theorem}

Our first lemma is a simple application of Kneser's Theorem: 

\begin{lemma} \label{lemma k_2 <=}

Suppose that $G$ is a finite abelian group and that $h$ is a positive integer.  Let $A$ be an $h$-incomplete subset of maximum size in $G$, and let $H$ denote the stabilizer of $hA$ in $G$.  Then both $A$ and $hA$ are unions of full cosets of $H$; furthermore, if $A$ and $hA$ consist of $k_1$ and $k_2$ cosets of $H$, respectively, then
$$k_2 \geq hk_1-h+1.$$

\end{lemma}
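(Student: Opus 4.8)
The plan is to apply Kneser's Theorem to the $h$-fold sumset $hA$ and then exploit the maximality of $A$. First I would set $H$ to be the stabilizer of $hA$ in $G$; by definition $hA + H = hA$, so $hA$ is a union of full cosets of $H$, say $k_2$ of them. This is the easy half of the first assertion. For the claimed inequality, I would apply Kneser's Theorem with $A_1 = \cdots = A_h = A$, giving
$$|hA| \geq h|A| - (h-1)|H|.$$
The subtle point is that I want to replace $|A|$ on the right-hand side by a quantity counting cosets of $H$, but a priori $A$ need not be a union of $H$-cosets, so $h|A|$ could exceed $h k_1 |H|$. This is where maximality enters.

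The key step is to argue that $A$ itself is a union of cosets of $H$. Suppose not; then $A$ is properly contained in $A' := A + H$, so $|A'| > |A|$. Since $hA$ is $H$-periodic we have $hA' = hA + H = hA$, which is still a proper subset of $G$ (because $A$ was $h$-incomplete). Thus $A'$ is an $h$-incomplete subset strictly larger than $A$, contradicting the maximality of $A$. Hence $A = A + H$, i.e.\ $A$ is a union of $k_1$ cosets of $H$, and $|A| = k_1|H|$. This handles the remaining part of the first assertion.

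With $|A| = k_1 |H|$ and $|hA| = k_2|H|$ in hand, the Kneser inequality becomes
$$k_2 |H| \geq h k_1 |H| - (h-1)|H|,$$
and dividing through by $|H|$ yields $k_2 \geq h k_1 - h + 1 = h k_1 - (h-1)$, as desired. I expect the main (though modest) obstacle to be the maximality argument in the second paragraph: one must be careful that enlarging $A$ to $A+H$ does not accidentally make the sumset equal to all of $G$, and the point is precisely that $hA$ was already $H$-periodic, so no new sums are created. Everything else is a direct substitution into Kneser's Theorem.
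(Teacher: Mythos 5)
Your proposal is correct and follows essentially the same route as the paper: show $h(A+H)=hA+H=hA\neq G$, invoke maximality to conclude $A+H=A$, and then substitute $|A|=k_1|H|$, $|hA|=k_2|H|$ into Kneser's Theorem. No gaps.
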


\begin{proof}  Consider the sumset $A+H$.  Since we have $$h(A+H)=hA+H=hA \neq G,$$ $A+H$ is $h$-incomplete in $G$.  But $A \subseteq A+H$ and $A$ is an $h$-incomplete subset of maximum size, therefore $A+H=A$, implying that $A$, and thus $hA$, are both unions of cosets of $H$.  By Kneser's Theorem, we have 
$$|hA| \geq h|A|-(h-1)|H|,$$ from which our claim follows.  
\end{proof}

We will also use the following observation:

\begin{lemma} \label{lemma phi inverse}

Suppose that $G$ is a finite abelian group and that $h$ is a positive integer.  Let $H$ be a subgroup of $G$ of index $d$ for some $d \in \mathbb{N}$, and let $\phi$ be the canonical map from $G$ to $G/H$.  Suppose further that $B$ is a subset of $G/H$, and set $A=\phi^{-1}(B)$.  Then $|A|=\frac{n}{d} \cdot |B|$ and $|hA|=\frac{n}{d} \cdot |hB|$.

\end{lemma}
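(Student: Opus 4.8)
The plan is to treat the two equalities in turn, the first being a direct counting statement and the second following from it once we observe that $A$ and hence $hA$ are periodic with respect to $H$.

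First I would establish $|A| = \frac{n}{d}|B|$. Since $\phi$ is the canonical surjection, $\phi^{-1}(\{x\})$ is a coset of $H$ for each $x \in G/H$, so it has exactly $|H| = n/d$ elements, and the fibers over distinct points of $B$ are disjoint. Writing $A = \phi^{-1}(B) = \bigcup_{x \in B}\phi^{-1}(\{x\})$ as a disjoint union then gives $|A| = |B|\cdot \tfrac nd$.

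Next I would handle the sumset. The key point is that $A = \phi^{-1}(B)$ is a union of cosets of $H$, i.e.\ $A+H = A$; consequently $hA + H = hA$, so $hA$ is also a union of cosets of $H$ and therefore $hA = \phi^{-1}(\phi(hA))$. Since $\phi$ is a group homomorphism it commutes with forming sumsets, so $\phi(hA) = h\,\phi(A)$; and since $\phi$ is surjective, $\phi(A) = \phi(\phi^{-1}(B)) = B$, whence $\phi(hA) = hB$. Combining these, $hA = \phi^{-1}(hB)$, and applying the first part with $B$ replaced by the subset $hB$ of $G/H$ yields $|hA| = \frac{n}{d}|hB|$.

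There is no serious obstacle here: the argument is a bookkeeping exercise about preimages under a quotient map. The only points that require a line of justification are that $hA$ is saturated under $\phi^{-1}\circ\phi$ (which comes from $A$ being $H$-periodic) and that $\phi(hA) = h\,\phi(A)$ (which is just the statement that a homomorphism carries an $h$-fold sumset to the $h$-fold sumset of the image); everything else is the elementary fiber-counting from the first paragraph.
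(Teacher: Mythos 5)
Your argument is correct: the fiber-counting gives $|A|=\tfrac{n}{d}|B|$, and the observation that $A+H=A$ forces $hA+H=hA$, so $hA=\phi^{-1}(\phi(hA))=\phi^{-1}(h\phi(A))=\phi^{-1}(hB)$, which reduces the second equality to the first. The paper states this lemma as an observation and gives no proof, and your write-up is exactly the routine justification it implicitly relies on, so there is nothing to compare beyond noting that you have filled in the omitted details correctly.
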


Our next result takes advantage of the fact that the elements of a finite abelian group have a natural ordering.  We review some background and introduce a useful result.

When $G$ is cyclic and of order $n$, we identify it with $\mathbb{Z}_n=\mathbb{Z}/n\mathbb{Z}$.  More generally, $G$ has a unique {\em type} $(n_1,\dots,n_r)$, where $r$ and $n_1, \dots, n_r$ are positive integers  so that $n_1 \geq 2$, $n_i$ is a divisor of $n_{i+1}$ for
 $i=1,\dots,r-1$, and $$G \cong \mathbb{Z}_{n_1} \times \cdots \times \mathbb{Z}_{n_r};$$ here $r$ is the {\em rank} of $G$ and $n_r$ is the {\em exponent} of $G$.

The above factorization of $G$ allows us to arrange the elements in lexicographic order and then consider the `first' $m$ elements in $G$.  Namely, suppose that $m$ is a nonnegative integer less than $n$; we then have unique integers $q_1, \dots, q_r$, so that $0 \leq q_k < n_k$ for each $1 \leq k \leq r$, and $$m=\sum_{k=1}^r q_k n_{k+1} \cdots n_{r}.$$  For simplicity, we assume $q_r \geq 1$, in which case the first $m$ elements in $G$ range from the zero element to $(q_1, \dots, q_{r-1}, q_r-1)$ and thus form the set 
$${\mathcal I}(G,m) = \bigcup_{k=1}^r \{q_1\} \times \cdots \times \{q_{k-1}\} \times \{0,1, \dots, q_k-1\} \times \mathbb{Z}_{n_{k+1}} \times \cdots \times \mathbb{Z}_{n_{r}}.$$

The advantage of considering these initial sets is that their $h$-fold sumsets are also initial sets.  Indeed, assuming for simplicity that $hq_k < n_k$ for each $k$, we find that $h {\mathcal I}(G,m)$ consists of the elements from the zero element to $(hq_1, \dots, hq_{r-1},  hq_r -h )$, and thus
$$h {\mathcal I}(G,m) = {\mathcal I}(G,hm-h+1).$$  

We will also employ a slight modification of ${\mathcal I}(G,m)$ where its last element is replaced by the next one in the lexicographic order.  To avoid degenerate cases, we further assume that $q_r \geq 3$, in which case we have 
$${\mathcal I}^\ast(G,m)= {\mathcal I}(G,m-1) \cup \{(q_1, \dots, q_{r-1}, q_r)\};$$  an easy calculation shows that
$$h{\mathcal I}^\ast(G,m)= {\mathcal I}(G,hm-1) \cup \{(hq_1, \dots, hq_{r-1}, hq_r)\}.$$

We can summarize these calculations, as follows.

\begin{proposition} \label{initial segments}
Suppose that $G$ is of type $(n_1,\dots,n_r)$.  Let $0 \leq m < n$, and let $q_1, \dots, q_r$  be the unique integers with $0 \leq q_k < n_k$ for each $1 \leq k \leq r$ for which $$m=\sum_{k=1}^r q_k n_{k+1} \cdots n_{r}.$$
Let $h$ be a positive integer for which $ h q_k < n_k$ for each $1 \leq k \leq r$.  Then for the $m$-subsets ${\mathcal I}(G,m)$ and ${\mathcal I}^\ast(G,m)$ of $G$ we have the following:
\begin{enumerate}
  \item If $q_r \geq 1$, then $|h {\mathcal I}(G,m)| = hm-h+1$. 
  \item If $q_r \geq 3$, then $|h{\mathcal I}^\ast(G,m)|=hm.$
\end{enumerate}
\end{proposition}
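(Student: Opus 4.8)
The plan is to prove the two cardinality statements by first determining, exactly, the sumsets $h\mathcal{I}(G,m)$ and $h\mathcal{I}^{\ast}(G,m)$, and then reading off their sizes. I would fix the identification $G\cong\mathbb{Z}_{n_1}\times\cdots\times\mathbb{Z}_{n_r}$ and note that the expansion $m=\sum_{k=1}^{r}q_k n_{k+1}\cdots n_r$ with $0\le q_k<n_k$ is precisely the mixed-radix representation of $m$; hence $(a_1,\dots,a_r)\mapsto\sum_k a_k n_{k+1}\cdots n_r$ is an order isomorphism from $G$ with the lexicographic order onto $\{0,1,\dots,n-1\}$ carrying $\mathcal{I}(G,\ell)$ to $\{0,\dots,\ell-1\}$, so $|\mathcal{I}(G,\ell)|=\ell$ for $0\le\ell\le n$. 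With this notation, part (1) is the identity $h\mathcal{I}(G,m)=\mathcal{I}(G,hm-h+1)$, and part (2) is the identity $h\mathcal{I}^{\ast}(G,m)=\mathcal{I}(G,hm-1)\cup\{z\}$, where $z$ is the element with digit vector $(hq_1,\dots,hq_{r-1},hq_r)$ and $z\notin\mathcal{I}(G,hm-1)$. The hypotheses $hq_k<n_k$ guarantee $hm<n$, so both right-hand sides make sense, and the claimed cardinalities $hm-h+1$ and $hm$ follow at once.

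I would prove both identities by induction on the rank $r$. The base case $r=1$, where $m=q_1$, is a direct interval computation in $\mathbb{Z}_n$: $h\{0,\dots,m-1\}=\{0,\dots,hm-h\}$, and $h(\{0,\dots,m-2\}\cup\{m\})=\{0,\dots,hm-2\}\cup\{hm\}$ --- here $hm<n$ rules out wraparound, $m\ge1$ (resp.\ $m\ge3$) keeps the intervals nonempty, and $m\ge3$ is exactly what makes the levels $\{sm,\dots,sm+(h-s)(m-2)\}$ for $s=0,\dots,h-1$ overlap and fill $\{0,\dots,hm-2\}$. For the inductive step I would peel off the first coordinate: writing $G=\mathbb{Z}_{n_1}\times G'$ with $G'$ of rank $r-1$ and order $n'=n/n_1$, and $m=q_1 n'+m'$, the definition of $\mathcal{I}$ gives
$$\mathcal{I}(G,m)=(\{0,\dots,q_1-1\}\times G')\ \sqcup\ (\{q_1\}\times\mathcal{I}(G',m')),$$
and the same decomposition holds with $\mathcal{I}^{\ast}$ on both sides.

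Now I would evaluate $h\mathcal{I}(G,m)$ by sorting the $h$ summands according to the number $s$ taken from the block $\{q_1\}\times\mathcal{I}(G',m')$. For $0\le s\le h-1$ at least one summand has free second coordinate, so the sum's second coordinate ranges over all of $G'$, while its first coordinate ranges over the integer interval $[\,sq_1,\ sq_1+(h-s)(q_1-1)\,]$, with no wraparound since the right end is $\le hq_1<n_1$; for $s=h$ the contribution is $\{hq_1\}\times h\mathcal{I}(G',m')$, covered by the induction hypothesis. The one genuine computation is that as $s$ runs through $0,\dots,h-1$ these first-coordinate intervals abut and cover exactly $[0,hq_1-1]$, which boils down to $q_1-1\le(h-s)(q_1-1)$ and holds since $h-s\ge1$. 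Combining the pieces and applying the induction hypothesis,
$$h\mathcal{I}(G,m)=(\{0,\dots,hq_1-1\}\times G')\ \sqcup\ (\{hq_1\}\times\mathcal{I}(G',hm'-h+1)),$$
which is $\mathcal{I}(G,hm-h+1)$ by the decomposition formula read backwards. Part (2) is identical save that the $s=h$ slice contributes $\{hq_1\}\times h\mathcal{I}^{\ast}(G',m')=\{hq_1\}\times(\mathcal{I}(G',hm'-1)\cup\{y'\})$ by induction; removing the single point $(hq_1,y')$ leaves the initial segment $\mathcal{I}(G,hm-1)$, and $(hq_1,y')$ is the point $z$.

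Apart from the interval bookkeeping, everything is routine manipulation of the displayed decomposition. The one step needing care --- and the main obstacle --- is the inductive interval argument: checking that the first-coordinate levels for $s=0,\dots,h-1$ together with the top level $s=h$ form a block of consecutive integers (with, in the starred case, a lone gap of size one just below $hq_1$). This is precisely where $hq_k<n_k$ is indispensable, since wraparound would merge levels out of order, and where, in part (2), the assumption $q_r\ge3$ is used --- already in the base case.
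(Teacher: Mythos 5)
Your proposal is correct and takes essentially the same route as the paper: the paper's proof consists precisely of the identities $h\mathcal{I}(G,m)=\mathcal{I}(G,hm-h+1)$ and $h\mathcal{I}^\ast(G,m)=\mathcal{I}(G,hm-1)\cup\{(hq_1,\dots,hq_{r-1},hq_r)\}$, asserted via the lexicographic (mixed-radix) description and left as ``an easy calculation,'' and your induction on the rank simply supplies that calculation, including the correct use of $hq_k<n_k$ to prevent wraparound and of $q_r\geq 3$ in the starred case. The only point left unremarked is the degenerate case of a leading digit $q_1=0$ (empty first block), where only the $s=h$ level occurs; your formulas remain valid there, so this is harmless.
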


\section{Two-fold sumsets}

In this section we prove Theorem~\ref{intro thm h=2}.  We separate two cases depending on the parity of the order of the group: the even case is considered in Theorem~\ref{h=2,even} and the odd case is established in Theorem~\ref{h=2,odd}.

We start by determining the critical number $\chi (G,2)$.

\begin{proposition}  \label{2-critical prop}

For any abelian group $G$ of order $n$ we have $$\chi (G,2)= \left \lfloor n/2 \right \rfloor +1.$$

\end{proposition}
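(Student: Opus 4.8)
The plan is to establish the two inequalities $\chi(G,2) \le \lfloor n/2 \rfloor + 1$ and $\chi(G,2) > \lfloor n/2 \rfloor$ separately. For the upper bound I would take an arbitrary subset $A \subseteq G$ with $|A| \ge \lfloor n/2 \rfloor + 1$ and show $2A = G$. Let $H$ be the stabilizer of $2A$; by Lemma~\ref{lemma k_2 <=} (applied after passing to a maximum-size $2$-incomplete set, or directly via Kneser) we get $|2A| \ge 2|A| - |H|$. If $2A \ne G$ then $H$ is a proper subgroup, so $|H| \le n/2$, giving $|2A| \ge 2(\lfloor n/2\rfloor + 1) - n/2 > n/2 \ge |H|$; hence $2A$ is a union of more than one coset of $H$, so $|2A| \ge 2|H|$, and one pushes this to a contradiction. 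The cleanest route is: since $2A \ne G$, the index $[G:H] = d \ge 2$; writing $n/d = |H|$, Kneser gives $|2A| \ge 2|A| - n/d$, and $2A$ being a proper union of cosets of $H$ forces $|2A| \le n - n/d$. Combining, $2|A| - n/d \le n - n/d$, i.e. $|A| \le n/2$, contradicting $|A| \ge \lfloor n/2\rfloor + 1$ (note $\lfloor n/2\rfloor + 1 > n/2$ always). So $2A = G$, proving $\chi(G,2) \le \lfloor n/2\rfloor + 1$.

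For the lower bound I must exhibit a $2$-incomplete subset of size $\lfloor n/2 \rfloor$. Here I would invoke Proposition~\ref{initial segments}: take $m = \lfloor n/2 \rfloor$ and consider the initial segment $\mathcal{I}(G,m)$. I need to check that the hypothesis $2q_k < n_k$ holds for each $k$ with the chosen $m$; since $m \le n/2$, writing $m = \sum q_k n_{k+1}\cdots n_r$ the digits $q_k$ satisfy roughly $q_k \le n_k/2$, and a short case analysis (splitting on whether some $n_k$ is even, and on parity of $n$) confirms $2q_k < n_k$ — the borderline case is when $q_k = n_k/2$ with $n_k$ even, which forces all subsequent digits to vanish and can be sidestepped by choosing the representation carefully, e.g. taking the initial segment of size $m$ ending just below the "halfway" element. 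Then part~(1) of Proposition~\ref{initial segments} gives $|2\mathcal{I}(G,m)| = 2m - 1 = 2\lfloor n/2\rfloor - 1 < n$, so $\mathcal{I}(G,m)$ is $2$-incomplete of size $\lfloor n/2\rfloor$, whence $\chi(G,2) \ge \lfloor n/2\rfloor + 1$.

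The main obstacle is the lower bound bookkeeping: verifying that the digit condition $2q_k < n_k$ of Proposition~\ref{initial segments} is met for $m = \lfloor n/2\rfloor$ in \emph{every} group type, including the cyclic case $\mathbb{Z}_n$ with $n$ odd (where $m = (n-1)/2$ and $2m - 1 = n - 2 < n$ works cleanly) and the delicate even cases where a digit could hit exactly half of its modulus. I expect this to reduce to: if $n$ is odd then every $n_k$ is odd and the digits of $(n-1)/2$ are safely below half; if $n$ is even, pick the segment so that the problematic half-digit does not occur, or alternatively handle $G$ of even order by the direct construction $A = \phi^{-1}(B)$ where $\phi: G \to \mathbb{Z}_2$ and $B = \{1\}$, giving $|A| = n/2$ and $2A = \phi^{-1}(0) \ne G$ via Lemma~\ref{lemma phi inverse} — this last trick disposes of all even $n$ at once, leaving only the odd case for the initial-segment argument. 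Everything else is routine arithmetic.
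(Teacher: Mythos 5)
Your proof is correct, and its lower-bound half coincides with the paper's: for even $n$ a coset of an index-$2$ subgroup (your $\phi^{-1}(1)$ is precisely that), and for odd $n$ the initial segment ${\mathcal I}(G,(n-1)/2)$, whose digits $q_k=(n_k-1)/2$ satisfy $2q_k=n_k-1<n_k$ automatically, so the delicate digit bookkeeping you anticipate never actually arises once even $n$ is disposed of separately. Where you genuinely differ is the upper bound: you argue via Kneser's Theorem applied to the stabilizer $H$ of $2A$, combining $|2A|\ge 2|A|-|H|$ with $|2A|\le n-|H|$ (valid because $2A\neq G$ is a union of $H$-cosets), which forces $|A|\le n/2$; the paper instead uses the one-line pigeonhole observation that if $|A|>n/2$ then $A$ and $g-A$ cannot be disjoint for any $g\in G$, hence $2A=G$. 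Both are sound; your route is a clean corollary of the machinery already set up (and, as you note, needs only the stabilizer property of $2A$, not the maximality hypothesis of Lemma~\ref{lemma k_2 <=}), while the paper's argument is more elementary and avoids Kneser entirely for this step. The only loose thread in your write-up is the intermediate detour (``$|2A|\ge 2|H|$, then push to a contradiction''), which is superseded by your own cleaner computation and should simply be dropped.
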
 

\begin{proof}  Suppose that $A$ is a subset of $G$ of size $|A| > n/2$.  Since $A$ and $g-A$ cannot be disjoint then for any $g \in G$, we have $2A=G$.  

To complete the proof, we need to identify a subset of $G$ of size $\left \lfloor n/2 \right \rfloor$ that is 2-incomplete.  When $n$ is even, any subgroup of index 2 (or a coset of such subgroup) will do.  

Suppose now that $n$ is odd, in which case $G$ has type $(n_1, \dots, n_r)$ for some $r, n_1, \dots, n_r \in \mathbb{N}$ and $n_k$ odd for all $k$.  We then have
$$\frac{n-1}{2}=\sum_{k=1}^r \frac{n_k-1}{2} \cdot n_{k+1} \cdots n_r.$$  Therefore, according to Proposition \ref{initial segments}, the initial segment  
${\mathcal I}(G,(n-1)/2)$ has a 2-fold sumset of size $n-2$ and is thus $2$-incomplete.     
\end{proof}

We now turn to finding $$S(G,2)=\{|2A| \; : \; A \subset G, \; |A|=\left \lfloor n/2 \right \rfloor, \; 2A \neq G\}.$$
We start with a result that may be of independent interest.

\begin{theorem}  \label{two unequal components}
Let $G$ be a group of even order whose exponent is not divisible by $4$, and suppose that $A$ is a subset of $G$ of size $|A|=n/2$.   Then $G$ has a subgroup $H$ of order $n/2$ for which $$|A \cap H| \neq |A \cap (G \setminus H)|.$$

\end{theorem}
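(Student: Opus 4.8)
The plan is to recast the statement in terms of index‑$2$ subgroups and then to settle it by a double‑counting argument on $G/2G$. Since $n$ is even, a subgroup of order $n/2$ is the same as a subgroup of index $2$, and such subgroups exist. Every index‑$2$ subgroup $H$ of $G$ contains $2G$ (because $G/H\cong\Z/2\Z$ annihilates $2G$), so, writing $\phi\colon G\to G/2G$ for the canonical map, the index‑$2$ subgroups of $G$ are exactly the $\phi$‑preimages of the index‑$2$ subgroups (``hyperplanes'') of $G/2G$. Here is where the hypothesis enters: the exponent of $G$ is not divisible by $4$ precisely when the Sylow $2$‑subgroup of $G$ is elementary abelian, so we may write $G\cong\F_2^{s}\times K$ with $s\ge 1$ and $|K|=m$ odd; then $2G=\{0\}\times K$ has odd order $m=n/2^{s}$, and $G/2G\cong\F_2^{s}$. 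Consequently the cosets of $2G$ partition $G$ into $2^{s}$ blocks of odd size $m$; index them by $v\in\F_2^{s}$, put $C_v=\phi^{-1}(v)$ and $a_v=|A\cap C_v|$, so that each $a_v$ is an integer with $0\le a_v\le m$ and $\sum_{v}a_v=|A|=n/2$.

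Now suppose, toward a contradiction, that $|A\cap H|=|A\cap(G\stm H)|$ for \emph{every} index‑$2$ subgroup $H$ of $G$; then $|A\cap H|=|A|/2=n/4$ for all such $H$. Writing a given $H$ as $\phi^{-1}(V)$ for a hyperplane $V\le\F_2^{s}$ gives $|A\cap H|=\sum_{v\in V}a_v$, so the assumption becomes
$$\sum_{v\in V}a_v=\frac{n}{4}\qquad\text{for every hyperplane }V\le\F_2^{s}.$$
The crux is to sum this identity over all $2^{s}-1$ hyperplanes of $\F_2^{s}$: each nonzero $v$ lies in exactly $2^{s-1}-1$ of them, while $v=0$ lies in all $2^{s}-1$ of them, so
$$(2^{s}-1)\,\frac{n}{4}=\sum_{V}\sum_{v\in V}a_v=(2^{s-1}-1)\sum_{v}a_v+2^{s-1}a_0=(2^{s-1}-1)\,\frac{n}{2}+2^{s-1}a_0 .$$
Solving for $a_0$ yields $a_0=n/2^{s+1}=m/2$. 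Since $a_0$ is a nonnegative integer and $m$ is odd, this is a contradiction, and the theorem follows.

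I expect the only genuinely delicate point to be the structural step in the first paragraph — namely, recognizing that ``exponent not divisible by $4$'' is exactly the condition that makes $2G$ (hence each block $C_v$) of odd order, which is what forces $m/2\notin\Z$ at the end. Everything after that is bookkeeping: the hyperplane count (each nonzero $v\in\F_2^{s}$ lies in $2^{s-1}-1$ hyperplanes, while $0$ lies in all $2^{s}-1$) and a one‑line solve. Observe also that the easy case $4\nmid n$ is subsumed: there $s=1$, the unique index‑$2$ subgroup is $2G$ itself, and the computation returns $|A\cap 2G|=m/2$ with $m=n/2$ odd.
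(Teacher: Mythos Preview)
Your argument is correct. Both your proof and the paper's proof begin from the same structural observation --- that the hypothesis ``exponent not divisible by $4$'' forces $G\cong\F_2^{s}\times K$ with $|K|=m$ odd, so that the cosets of $2G$ have odd size $m$ --- and both reach the same contradiction, namely that a coset of $2G$ would have to contain $m/2$ elements of $A$.

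The execution of that middle step is genuinely different, however. The paper proceeds by induction on $k$: it defines ``projections'' of size $n/2^{k}$ (intersections of $k$ cosets of index-$2$ subgroups coming from the $\F_2$-coordinates), and uses a Gray-code ordering on $\F_2^{k}$ to propagate the equal-split hypothesis from level $k-1$ to level $k$, until at $k=s$ every coset of $2G$ is forced to contain $n/2^{s+1}=m/2$ elements. Your proof sidesteps the induction entirely: by summing the hyperplane identities $\sum_{v\in V}a_v=n/4$ over all $2^{s}-1$ hyperplanes of $\F_2^{s}$ and using the standard incidence count (each nonzero $v$ lies in $2^{s-1}-1$ hyperplanes, $0$ in all of them), you isolate $a_0=m/2$ in one line. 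This is shorter and more transparent; the paper's Gray-code induction, on the other hand, actually establishes the stronger (though unnecessary) fact that \emph{every} $a_v$ equals $m/2$, not just $a_0$.
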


\begin{proof}  We proceed indirectly, and assume that each subgroup of order $n/2$ in $G$ contains exactly half of the elements of $A$.  We may assume that $G=G_1 \times G_2$, where $G_1$ has odd order, and $G_2=\mathbb{Z}_{n_1} \times \cdots \times \mathbb{Z}_{n_r}$ with all $n_i$ even; by assumption, we also know that they are not divisible by $4$.

We say that a subset $C$ of $G$ of the form $C=G_1 \times B_1 \times \cdots \times B_r$ is a {\em projection} of $G$, if for each $i$, either $B_i=\mathbb{Z}_{n_i}$ or $B_i$ is a coset of the subgroup of index 2 in $\mathbb{Z}_{n_i}$.  Note that each projection of $G$ has size $n/2^k$ for some $0 \leq k \leq r$.  We prove the following:

\bigskip

{\bf Claim:}  If $C$ is a projection of $G$ of size $n/2^k$, then $A \cap C$ has size $n/2^{k+1}$.

Since this is clearly impossible for $k=r$, we arrive at a contradiction.

\medskip

{\em Proof of Claim:} We use induction on $k$.  The claim trivially holds for $k=0$, and it also holds for $k=1$, since any projection of $G$ of size $n/2$ is either a subgroup of index 2 or a coset of that subgroup  and, by our indirect assumption, both contain exactly $n/4$ elements of $A$.

Assume now that our claim holds for $k-1$ for some $k \leq r$.  To prove our claim for $k$, by symmetry it clearly suffices to consider projections in 
$${\mathcal C}=\{G_1 \times B_1 \times \cdots \times B_r \; : \; |B_i|=n_i/2 \; \mbox{for} \; 1 \leq i \leq k \; \mbox{and} \; |B_i|=n_i \; \mbox{for} \; k+1 \leq i \leq r\}.$$

Recall that the elements of $\mathbb{Z}_2^k$ may be arranged in Gray-code order; that is, we have a sequence $$e_0, e_1, \dots, e_{2^k-1}, e_{2^k}$$ where $e_0=e_{2^k}$ is the zero-element of $\mathbb{Z}_2^k$, and  $e_j$ and $e_{j+1}$ differ in exactly one position for every $j=0,1,\dots, 2^k-1$.  We can then arrange the elements of ${\mathcal C}$ in a corresponding sequence  $$C_0, C_1, \dots, C_{2^k-1}, C_{2^k}$$ where $C_j=G_1 \times B_1 \times \cdots \times B_r$ has $B_i \leq \mathbb{Z}_{n_i}$ for some $1 \leq i \leq k$ if, and only if, the $i$-th component of $e_j$ equals $0$ (and $(\mathbb{Z}_{n_i} \setminus B_i) \leq \mathbb{Z}_{n_i}$ otherwise).  

Observe that, for every $j=0,1,\dots, 2^k-1$, the union of $C_j$ and $C_{j+1}$ is a projection of $G$ of size $n/2^{k-1}$; therefore, by our inductive hypothesis, it must contain exactly $n/2^{k}$ elements of $A$.  Thus, if $C_0$ contains $t$ elements of $A$, then $C_j$ will contain $t$ elements of $A$ if $j$ is even, and $n/2^k-t$ elements of $A$ when $j$ is odd.  We need to show that $t=n/2^{k+1}$.

It is not hard to see (by a simple parity argument) that $$H=C_0 \cup C_2 \cup C_4 \cup  \cdots \cup C_{2^k-2}$$ is a subgroup of index $2$ in $G$, so by our assumption, it contains $n/4$ elements of $A$.  Therefore, $t \cdot 2^k/2  =n/4$, which proves our claim.  
\end{proof}

We note that the claim of Theorem \ref{two unequal components} may be false in groups with exponent divisible by $4$.  For example, in $\mathbb{Z}_2 \times \mathbb{Z}_4$, the set $\mathbb{Z}_2 \times \{0,1\}$ intersects all three subgroups in two elements.

We are now ready to determine $S(G,2)$.  We start with the case when $n$ is even.

\begin{theorem}\label{h=2,even}

If the exponent of $G$ is divisible by $4$, then $$S(G,2)=\left \{ n-n/d \; : \; d|n, \; 2|d \right \};$$ if the exponent of $G$ is even but not divisible by $4$, then 
$$S(G,2)=\left \{ n-n/d \; : \; d|n, \; 2|d, \; d \neq 4 \right \}.$$

\end{theorem}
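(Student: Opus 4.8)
The proof splits naturally into an inclusion in each direction, and the two cases (exponent divisible by $4$ or not) are handled in parallel, differing only in how the value $3n/4$ is treated.

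First I would establish that every element of $S(G,2)$ has the claimed form. Let $A$ be a $2$-incomplete subset of maximum size, so $|A| = n/2$ by Proposition \ref{2-critical prop}, and let $H$ be the stabilizer of $2A$ with $|G/H| = d$. By Lemma \ref{lemma k_2 <=}, $A$ and $2A$ are unions of $k_1$ and $k_2$ cosets of $H$ respectively, with $k_1 = (n/2)/|H| = d/2$ (so $d$ is even) and $k_2 \geq 2k_1 - 1 = d-1$; since $2A \neq G$ we get $k_2 \leq d-1$, hence $k_2 = d-1$ and $|2A| = (d-1)|H| = n - n/d$. This already forces $d \mid n$ and $2 \mid d$. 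The extra restriction when the exponent is not divisible by $4$ is exactly Theorem \ref{two unequal components}: passing to $\bar A = \phi(A) \subseteq G/H$, the set $\bar A$ has size $d/2$ in a group of order $d$ with $2\bar A \neq G/H$, and by Lemma \ref{lemma phi inverse} it suffices to rule out $d = 4$. If $d=4$ then $\bar A$ is a $2$-element subset of a group of order $4$ whose sumset is not everything; a quick check shows the only such $\bar A$ (up to translation) is a subgroup of order $2$ or a coset of one — but then $G/H$ has a subgroup of order $2$ meeting $\bar A$ in a way that contradicts... here I would instead argue directly: $d=4$ forces $G/H \cong \Z_4$ or $\Z_2 \times \Z_2$, and in either case one analyzes the $2$-element $2$-incomplete subsets; when the exponent of $G$ is not divisible by $4$, the quotient $G/H$ of order $4$ cannot be $\Z_4$, so $G/H \cong \Z_2^2$, and the only $2$-incomplete $2$-subsets of $\Z_2^2$ are the three subgroups of order $2$ and their translates, each of which is a coset of a subgroup $K$ with $|2\bar A| = |K| = 2 = 4 - 4/2$, giving $d$ effectively $2$ not $4$ — so $d = 4$ genuinely does not occur. (This is where I expect to lean on a clean restatement of Theorem \ref{two unequal components} applied in $G/H$ rather than $G$.)

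Second, for the reverse inclusion, given an even divisor $d \mid n$ with $d \neq 4$ in the restricted case, I would construct an explicit $A$ of size $n/2$ with $|2A| = n - n/d$. Take $H$ a subgroup of index $d$ (which exists since $d \mid n$), let $\phi: G \to G/H$ be the canonical map, and build $B \subseteq G/H$ of size $d/2$ with $2B$ a proper subset of $G/H$ of size $d-1$; then $A = \phi^{-1}(B)$ works by Lemma \ref{lemma phi inverse}. For $d = 2$ take $B$ a singleton in $\Z_2$-... wait, rather: when $G/H$ is cyclic of order $d$ (possible iff $G$ has a cyclic quotient of order $d$, equivalently $d \mid$ exponent), take $B = \mathcal I(G/H, d/2)$, an initial segment of size $d/2$ whose doubling is an initial segment of size $d-1$ by Proposition \ref{initial segments}(1). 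When $d$ does not divide the exponent, $G/H$ need not be cyclic, and I would choose $H$ more carefully so that $G/H$ has a cyclic direct factor of order equal to the exponent-part, then take $B$ to be the preimage under a further projection of an initial segment in that cyclic factor — this always produces a $B$ with $2B$ of size exactly $d-1$ as long as $d/2 \geq 1$, and the construction survives precisely when $d \neq 4$ in the non-$4$ case because for $d=4$ the only available cyclic quotient has order $2$, not $4$.

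The main obstacle is the boundary value $3n/4$ (i.e.\ $d=4$): showing it is achievable exactly when $4$ divides the exponent requires, on the constructive side, exhibiting a $2$-subset of $\Z_4$ with sumset of size $3$ (namely $\{0,1\}$, since $2\{0,1\} = \{0,1,2\} \neq \Z_4$) and checking $\phi^{-1}$ of it has the right size; and on the impossibility side, ruling out $d=4$ when the exponent is not divisible by $4$, which is exactly the content of Theorem \ref{two unequal components} — in a group whose exponent is not divisible by $4$, any half-sized subset intersects some index-$2$ subgroup unevenly, and translating/projecting this forces the stabilizer index $d$ to be either $2$ or at least $6$, never $4$. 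I would make sure the logical link "$|A\cap H| \neq |A \cap (G\setminus H)|$ for some index-$2$ subgroup $H$" $\Rightarrow$ "$d \neq 4$" is spelled out: if $d = 4$, then $\bar A \subseteq G/H$ has $|\bar A| = 2$ and $2\bar A$ proper in $G/H$, forcing (as above) $\bar A$ to be a coset of a subgroup of order $2$ in $G/H \cong \Z_2^2$, whence $A$ is a coset of an index-$2$ subgroup of $G$ up to a $\Z_2$-ambiguity, and one sees $A$ meets every index-$2$ subgroup evenly, contradicting Theorem \ref{two unequal components}.
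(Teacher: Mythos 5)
Your upper-bound half is sound, and your way of excluding $d=4$ is actually a legitimate shortcut: once $A=A+H$ with $[G:H]=4$, the quotient $G/H$ has exponent dividing the exponent of $G$, hence is $\mathbb{Z}_2\times\mathbb{Z}_2$, and any $2$-element subset $\bar A=\{x,y\}$ there has $2\bar A=\{0,x+y\}$, a subgroup of order $2$; so the stabilizer of $2A$ has index $2$, not $4$ (equivalently $|2A|=n/2\neq 3n/4$). This bypasses Theorem~\ref{two unequal components}, which is what the paper invokes at this point (splitting $A=A_1\cup A_2$ along an index-$2$ subgroup with $|A_1|>n/4$ and concluding $|2A|=n/2$ or $|2A|>3n/4$). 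Your closing attempt to re-derive the same thing from Theorem~\ref{two unequal components} is muddled, but it is redundant given the direct argument.

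The genuine gap is in the achievability direction. Your construction of $B\subseteq G/H$ with $|B|=d/2$ and $|2B|=d-1$ only works when $G$ has a cyclic quotient of order $d$, i.e.\ when $d$ divides the exponent. For the remaining even divisors your fallback --- take $B$ to be the preimage, under a further projection of $G/H$ onto a cyclic factor, of an initial segment --- cannot succeed: such a $B$ is a union of cosets of the nontrivial kernel $N$ of that projection, hence so is $2B$, so $|2B|$ is a multiple of $|N|$ and can never equal $d-1$; what you actually get is $n-n/d'$ where $d'$ is the order of the cyclic factor, not $n-n/d$. Concretely, for $G=\mathbb{Z}_2\times\mathbb{Z}_4$ and $d=n=8$ your recipe gives $B=\mathbb{Z}_2\times\{0,1\}$ with $|2B|=6=n-n/4$, whereas the theorem requires exhibiting $|2A|=7$, which is indeed attainable. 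The paper closes exactly this case with a different construction valid in every group $K$ of order $d>4$: pick an index-$2$ subgroup $H\le K$, $h\in H$, $g\notin H$, and set $B=(H\setminus\{h\})\cup\{g\}$; since $|H\setminus\{h\}|=d/2-1>d/4$ one gets $2(H\setminus\{h\})=H$, hence $2B=K\setminus\{h+g\}$ and $|2B|=d-1$. (A minor additional point: your citation of Proposition~\ref{initial segments}(1) with $m=d/2$ violates its hypothesis $hq_k<n_k$, since $2\cdot d/2=d$; the conclusion $|2\{0,\dots,d/2-1\}|=d-1$ is of course correct by direct computation, but it is not covered by that proposition as stated.)
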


{\em Proof:}  Using the notations of Lemma \ref{lemma k_2 <=}, we have $|A|=n/2=k_1n/d$ where $d$ is the index of the stabilizer subgroup of $2A$.  This implies that $d$ is even and $k_1=d/2$; using Lemma \ref{lemma k_2 <=} again yields $k_2 \geq d-1$ and thus $|2A| = k_2n/d$ equals $n$ or $ n -n/d$.  Therefore, we have $$S(G,2) \subseteq \left \{ n-n/d \; : \; d|n, \; 2|d \right \}.$$

When the exponent of $G$ is congruent to $2$ mod $4$, then we can rule out $d=4$, as follows.  By Theorem \ref{two unequal components}, $G$ has a subgroup $H$ of index $2$ for which $H \cap A$ and $ (G \setminus H) \cap A$ have different sizes; let $A=A_1 \cup A_2$ where $A_1$ and $A_2$ are subsets of different cosets of $H$.  Without loss of generality, we assume that $|A_1| > n/4$, and thus $2A_1=H$.  If $A_2$ were to be empty, then $A$ is a full coset of $H$, and thus $|2A| =n/2 \neq 3n/4$.  Otherwise, $|A_1+A_2| \geq |A_1|>n/4$, which implies that $|2A| \geq |2A_1|+|A_1+A_2|>3n/4$. 

What remains is the proof that all remaining values arise as sumset sizes.  This is clearly true when $d=2$, or when $d=4$ and the exponent of $G$ is divisible by 4.  Suppose now that $d$ is an even divisor of $n$ and $d>4$.  According to Lemma \ref{lemma phi inverse}, it suffices to prove that every group $K$ of order $d$ contains some subset $B$ of size $d/2$ for which $|2B|=d-1$.  Let $H$ be any subgroup of index 2 in $K$, and set $B=(H \setminus \{h\}) \cup \{g\}$, where $h$ and $g$ are arbitrary elements of $H$ and $K \setminus H$, respectively.  Since $|H \setminus \{h\}|=d/2-1 > d/4$, we get $2(H \setminus \{h\})=H$ and thus $2A=G \setminus \{h+g\}$.  Therefore, $|2B|=d-1$, and our proof is complete.  {\hfill $\Box$}

Let us now turn to the case when $n$ is odd.

\begin{theorem}\label{h=2,odd}
If $G \cong \mathbb{Z}_3$, $\mathbb{Z}_5,$ or $\mathbb{Z}_3^2$, then $S(G,2)=\{n-2\}$.  For all other $G$ of odd order we have $S(G,2)=\{n-2,n-1\}$.

\end{theorem}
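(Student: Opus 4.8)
The plan is to treat the two directions of the claimed equality $S(G,2)=\{n-2,n-1\}$ (or $\{n-2\}$ in the three exceptional cases) separately, using Lemma~\ref{lemma k_2 <=} for the upper bound on $|2A|$ and explicit constructions for the lower bound. First I would establish the general upper bound: if $A$ is a $2$-incomplete subset of maximum size $(n-1)/2$ and $H$ is the stabilizer of $2A$ with index $d$, then by Lemma~\ref{lemma k_2 <=} both $A$ and $2A$ are unions of $H$-cosets. Since $|A|=(n-1)/2$ is not a multiple of $n/d$ unless $d=1$, we must have $d=1$, i.e. $2A$ has trivial stabilizer. Kneser's Theorem then gives $|2A|\geq 2|A|-1=n-2$, so $|2A|\in\{n-2,n-1\}$ (it cannot equal $n$), proving $S(G,2)\subseteq\{n-2,n-1\}$ for every odd-order $G$.

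Next I would show $n-2\in S(G,2)$ for all odd-order $G$: this was essentially already done in the proof of Proposition~\ref{2-critical prop}, where the initial segment ${\mathcal I}(G,(n-1)/2)$ was shown to have $|2{\mathcal I}(G,(n-1)/2)|=n-2$ via Proposition~\ref{initial segments}(1) (the hypothesis $2q_k<n_k$ holds since $q_k=(n_k-1)/2$ and $n_k$ is odd). So only the value $n-1$ needs attention. For the three small exceptional groups $\mathbb{Z}_3,\mathbb{Z}_5,\mathbb{Z}_3^2$ I would argue directly — for $\mathbb{Z}_3$ and $\mathbb{Z}_5$ the only $2$-incomplete sets of sizes $1$ and $2$ respectively are (up to translation) $\{0\}$ and $\{0,1\}$-type sets, whose doublings have size $n-2$; for $\mathbb{Z}_3^2$ one checks that every $4$-subset $A$ with $2A\neq G$ must (after translation) be contained in the union of two cosets of a subgroup of order $3$ with the coset not containing $0$ having a single point, forcing $|2A|=7=n-2$. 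These are small finite case analyses.

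The substantive part is constructing, for every odd-order $G$ other than the three exceptions, a $2$-incomplete subset of size $(n-1)/2$ whose doubling has size exactly $n-1$. Here I would use the modified initial segment ${\mathcal I}^\ast(G,m)$ from Proposition~\ref{initial segments}(2) with $m=(n-1)/2$: this works precisely when $q_r=(n_r-1)/2\geq 3$, i.e. when the exponent $n_r\geq 7$, giving $|2{\mathcal I}^\ast(G,m)|=2m=n-1$, and one must separately verify ${\mathcal I}^\ast$ is $2$-incomplete (its doubling is ${\mathcal I}(G,n-2)\cup\{(hq_1,\dots,hq_r)\}$, which misses the element just below it). The remaining odd-order groups have exponent in $\{3,5\}$: exponent $5$ forces $G=\mathbb{Z}_5^s$, exponent $3$ forces $G=\mathbb{Z}_3^s$; since $\mathbb{Z}_3,\mathbb{Z}_5,\mathbb{Z}_3^2$ are excluded, I need $G=\mathbb{Z}_3^s$ with $s\geq 3$, or $G=\mathbb{Z}_5^s$ with $s\geq 2$. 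For these I would build a set of size $(3^s-1)/2$ (resp.\ $(5^s-1)/2$) with doubling of size one less than the whole group by a direct combinatorial construction — e.g.\ take a hyperplane-type union of cosets and perturb two points, or use Lemma~\ref{lemma phi inverse} to reduce to the rank-$2$ or rank-$3$ base cases $\mathbb{Z}_3^3$ and $\mathbb{Z}_5^2$, which can be exhibited explicitly.

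The main obstacle I expect is exactly this last small-exponent construction: the clean machinery of initial segments fails when the exponent is $3$ or $5$ because there is no "room" to slide the last coordinate, so an ad hoc construction is needed, and one has to be careful that the perturbed set is genuinely $2$-incomplete (its doubling really does omit a point) and that the omitted point is unique (so $|2A|=n-1$ rather than smaller). Verifying $2$-incompleteness of ${\mathcal I}^\ast$ in general — confirming the doubling formula from the excerpt and checking that the extra element $(hq_1,\dots,hq_r)$ together with ${\mathcal I}(G,n-2)$ really forms the complement of a single element — is routine but must be done carefully. Everything else reduces to the already-established lemmas and a handful of finite checks.
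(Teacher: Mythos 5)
Most of your outline coincides with the paper's proof: the containment $S(G,2)\subseteq\{n-2,n-1\}$ via Lemma~\ref{lemma k_2 <=} (note one slip: since $d$ denotes the \emph{index} of the stabilizer and $\gcd\bigl((n-1)/2,n\bigr)=1$, the conclusion is $d=n$, not $d=1$, though "trivial stabilizer" is what you meant and the Kneser bound $|2A|\ge n-2$ follows); the value $n-2$ from ${\mathcal I}(G,(n-1)/2)$ as in Proposition~\ref{2-critical prop}; the value $n-1$ from ${\mathcal I}^\ast(G,(n-1)/2)$ when the exponent is at least $7$ (incompleteness is automatic there, since $|2{\mathcal I}^\ast|=n-1<n$); and finite checks for $\mathbb{Z}_3$, $\mathbb{Z}_5$, $\mathbb{Z}_3^2$.

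The genuine gap is the remaining case you yourself flag as the main obstacle: producing, for $G=\mathbb{Z}_3^s$ with $s\ge 3$ and $G=\mathbb{Z}_5^s$ with $s\ge 2$, a set of size $(n-1)/2$ whose doubling has size exactly $n-1$. Of your two suggested routes, the reduction via Lemma~\ref{lemma phi inverse} to the base cases $\mathbb{Z}_3^3$ and $\mathbb{Z}_5^2$ cannot work: any set of the form $\phi^{-1}(B)$ has size a multiple of $|H|=n/d$, whereas $(n-1)/2$ is coprime to $n$ --- the very divisibility obstruction you used in your upper-bound argument --- so no nontrivial quotient is available and the base-case constructions do not lift to higher rank. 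Your fallback ("perturb a hyperplane-type set / initial segment") is the right idea and is exactly what the paper does, but you leave it unexecuted, and the execution is the only nonroutine content left: the paper takes the first $(n-1)/2$ elements and swaps a single element, replacing $(1,1,\dots,1,0,2,2)$ by $(1,1,\dots,1,2,0,0)$ in $\mathbb{Z}_3^r$ (so that $2A=\mathbb{Z}_3^r\setminus\{(2,\dots,2)\}$) and $(2,2,\dots,2,1,4)$ by $(2,2,\dots,2,3,0)$ in $\mathbb{Z}_5^r$ (so that $2A=\mathbb{Z}_5^r\setminus\{(4,\dots,4)\}$), and one must verify both that exactly one element is missed and that the swap does not fill it in. Without such an explicit construction and verification, the claim $n-1\in S(G,2)$ for these families is unproved, so as written the proposal does not establish the theorem.
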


\begin{proof}  Let $A$ be a subset of $G$ of size $(n-1)/2$.  By Lemma \ref{lemma k_2 <=}, $A$ is the union of some $k_1$ cosets of the stabilizer $H$ of $2A$; if $H$ has index $d$ in $G$, then we thus have $(n-1)/2=|A|=k_1n/d$.  But this implies that $d=n$ and $k_1=(n-1)/2$, so using Lemma \ref{lemma k_2 <=} again, we get that $2A$ has size $k_2 \geq n-2$.  Therefore, $S(G,2) \subseteq \{n-2,n-1\}$.

In the proof of Proposition \ref{2-critical prop} we already established that $n-2 \in S(G,2)$ by pointing out that the set ${\mathcal I}(G,(n-1)/2)$, consisting of the initial $(n-1)/2$ elements in $G$, has a 2-fold sumset of size $n-2$.  Similarly, Proposition \ref{initial segments} yields that, when $(n_r-1)/2 \geq 3$, then ${\mathcal I}^\ast(G,(n-1)/2)|$ is of size $(n-1)/2$ and has $|2{\mathcal I}^\ast(G,m)|=n-1.$  

This leaves us with the elementary abelian 3-groups and 5-groups.  When $r \geq 3$, for $\mathbb{Z}_3^r$ we may take the first $(n-1)/2$ elements, except that we replace $(1,1,\dots,1,0,2,2)$ by $(1,1,\dots,1,2,0,0)$; one can easily determine that this way $2A=\mathbb{Z}_3^r \setminus \{(2,2,\dots,2)\}$.  Similarly, when $r \geq 2$, for $\mathbb{Z}_5^r$ we may take the first $(n-1)/2$ elements, except that we replace $(2,2,\dots,2,1,4)$ by $(2,2,\dots,2,3,0)$; this way $2A=\mathbb{Z}_5^r \setminus \{(4,4,\dots,4)\}$.  It can also be readily verified that for $\mathbb{Z}_3$, $\mathbb{Z}_5,$ or $\mathbb{Z}_3^2$, we do not have $n-1 \in S(G,2)$.
\end{proof}

\section{Three-fold sumsets}

In this section we prove Theorem~\ref{intro thm h=3}.  We consider three cases: Theorem~\ref{S(G,3) when 2 mod 3 exists} covers the cases when the order $n$ of the group has some prime divisors that are congruent to 2 mod 3, Theorem~\ref{thm S(G,3) when 3|n but no 2 mod 3} deals with the cases when $n$ is divisible by 3 but has no divisors that are congruent to 2 mod 3, and Theorem~\ref{thm S(G,3) when all divisors are 1 mod 3} and Corollary~\ref{cor-z7n} establish the cases when all divisors of $n$ are congruent to 1 mod 3.

Our first task is to find the 3-critical number of each finite abelian group.

\begin{proposition}  \label{3-critical prop}

Suppose that $G$ is an abelian group of order $n$.  Then:
$$\chi (G,3) =\left\{
\begin{array}{cl}
\left(1+\frac{1}{p}\right) \frac{n}{3} +1 & \mbox{if $n$ has prime divisors congruent to $2$ mod $3$,} \\ & \mbox{and $p$ is the smallest such divisor,}\\ \\
\left\lfloor \frac{n}{3} \right\rfloor +1 & \mbox{otherwise.}
\end{array}\right.$$

\end{proposition}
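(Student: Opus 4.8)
The plan is to reduce the proposition to determining the largest size $M$ of a $3$-incomplete subset of $G$. Since every subset of a $3$-complete set is again $3$-complete, the smallest $m$ for which all $m$-subsets are $3$-complete equals $M+1$; hence it suffices to show $M=\left(1+\tfrac1p\right)\tfrac n3$ in the first case and $M=\lfloor n/3\rfloor$ in the second. Each equality I would split into an upper bound coming from Kneser's Theorem and a matching explicit construction.

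For the upper bound, let $A$ be any $3$-incomplete subset, put $H=\mathrm{stab}(3A)$, let $d=[G:H]$, and pass to the quotient $\bar G=G/H$ of order $d$. As in the proof of Lemma \ref{lemma k_2 <=}, the image $\bar A$ of $A$ satisfies $3\bar A\neq\bar G$ and has trivial stabilizer in $\bar G$ (because $H$ is the \emph{full} stabilizer of $3A$); moreover $|A|\le|\bar A|\cdot|H|=|\bar A|\,n/d$ because $A\subseteq\phi^{-1}(\bar A)$, and $d\ge2$ since $3A$ is a nonempty proper subset. Kneser in $\bar G$ then gives $3|\bar A|-2\le|3\bar A|\le d-1$, so $|\bar A|\le\lfloor(d+1)/3\rfloor$ and hence $|A|\le\frac nd\lfloor(d+1)/3\rfloor$, with $d$ ranging over divisors $\ge2$ of $n$. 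It remains to optimize this bound. Splitting on the residue of $d$ modulo $3$, the quantity $\frac nd\lfloor(d+1)/3\rfloor$ equals $\frac n3\cdot\frac{d+1}{d}$ when $d\equiv2\pmod3$ and is at most $\frac n3$ when $d\equiv0$ or $1$; since $\frac{d+1}{d}$ is decreasing in $d$, the maximum is controlled by the smallest divisor of $n$ that is $\equiv2\pmod3$. A short argument shows this divisor, when it exists, is prime and so equals $p$: a divisor $\equiv2\pmod3$ cannot have all prime factors $\equiv0,1\pmod3$, so it has a prime factor $\equiv2\pmod3$, which is then a smaller-or-equal such divisor. In the first case this yields $M\le\left(1+\tfrac1p\right)\tfrac n3$; in the second case no divisor of $n$ is $\equiv2\pmod3$, so only the bound $\frac n3$ survives, which one sharpens to $\lfloor n/3\rfloor$ using that $\frac nd\lfloor(d+1)/3\rfloor$ is an integer together with $n\equiv1\pmod3$ when $3\nmid n$.

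For the lower bound I would exhibit $3$-incomplete sets of exactly the claimed size. In the first case, choose $H\le G$ with $G/H\cong\mathbb{Z}_p$ and set $A=\phi^{-1}(B)$, where $B\subset\mathbb{Z}_p$ is the interval of length $(p+1)/3$; then $3B$ is the interval of length $p-1<p$, so by Lemma \ref{lemma phi inverse} $A$ is $3$-incomplete with $|A|=\frac{p+1}{3}\cdot\frac np=\left(1+\tfrac1p\right)\tfrac n3$. When $3\mid n$ (second case), a subgroup $H$ of index $3$ already works, since $3H=H\neq G$, giving $|A|=n/3$. When $3\nmid n$ (second case), every invariant factor $n_k$ of $G$ is $\equiv1\pmod3$, so setting $q_k=(n_k-1)/3$ writes $(n-1)/3=\sum_k q_k n_{k+1}\cdots n_r$ (by a telescoping identity) with $3q_k<n_k$ and $q_r\ge1$; by Proposition \ref{initial segments} the initial segment $\mathcal I(G,(n-1)/3)$ then has $3$-fold sumset of size $n-3$ and is $3$-incomplete of size $\lfloor n/3\rfloor$. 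Matching the upper and lower bounds gives the stated value of $\chi(G,3)$.

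I expect the main obstacle to be the bookkeeping in the upper bound. Unlike the maximum-size sets treated in Lemma \ref{lemma k_2 <=}, a general $3$-incomplete $A$ need not be a union of $H$-cosets, so the inequality $|A|\le|\bar A|\,|H|$ and the Kneser estimate must be combined through the quotient $\bar G$ rather than in $G$ directly; and the arithmetic optimization over the divisors $d$—in particular the behaviour of the floor function in the borderline residue classes and for small $n$—requires a careful case analysis.
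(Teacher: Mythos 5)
Your proof is correct and takes essentially the same route as the paper: a Kneser-based upper bound obtained through the quotient by the stabilizer of $3A$ (the paper packages this via Lemma~\ref{lemma k_2 <=} for a maximum-size incomplete set, while you pass to $G/H$ for an arbitrary incomplete set -- only a cosmetic difference), combined with the same extremal constructions, namely the pullback of an arithmetic progression of length $\lfloor (p+1)/3\rfloor$ from a cyclic quotient of prime index and the initial segment ${\mathcal I}(G,(n-1)/3)$ when every divisor of $n$ is congruent to $1$ mod $3$. One trivial slip: ``every subset of a $3$-complete set is again $3$-complete'' should say \emph{superset}, but the identity $\chi(G,3)=M+1$ you draw from it is immediate from the definition anyway.
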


\begin{proof}  It is easy to see that the expressions above provide lower bounds for $\chi (G,3)$.  Indeed, if $H$ is a subgroup of $G$ of prime index $p$ then $G/H$ is cyclic; by Lemma \ref{lemma phi inverse}, taking an arithmetic progression of size $\left \lfloor (p+1)/3 \right \rfloor$ in $G/H$ yields a set of size $\left \lfloor (p+1)/3 \right \rfloor \cdot n/p$ in $G$ whose 3-fold sumset has size
$$\left( 3 \cdot \left \lfloor \frac{p+1}{3} \right \rfloor -2 \right) \cdot \frac{n}{p},$$ which is less than $n$.  This establishes the cases when $n$ has prime divisors congruent to $2$ mod $3$, and $p$ is the smallest such divisor, or when $n$ is divisible by 3 (take $p=3$).  

For the case when all divisors of $n$ are congruent to $1$ mod $3$, let $(n_1,n_2, \dots, n_r)$ be the type of $G$, and note that 
$$\frac{n-1}{3}=\sum_{k=1}^r \frac{n_k-1}{3} \cdot n_{k+1} \cdots n_r.$$  Therefore, according to Proposition \ref{initial segments}, the initial segment  
${\mathcal I}(G,(n-1)/3)$ in $G$ has a 3-fold sumset of size $n-3$ and is thus $3$-incomplete.

We now show that the expressions above are upper bounds.  Suppose that $A \subseteq G$ is a 3-incomplete subset of maximum size in $G$.  Using the notations of Lemma \ref{lemma k_2 <=}, we have $|A|=k_1n/d$ and $|3A|=k_2n/d$ where $d$ is the index of the stabilizer subgroup of $3A$.  According to Lemma 
\ref{lemma k_2 <=}, $k_2 \geq 3k_1-2$, and since $3A \neq G$, we have $k_2 \leq d-1$, so $k_1 \leq  (d+1)/3.$  

We consider first the case when $n$ has prime divisors congruent to $2$ mod $3$, and $p$ is the smallest such divisor.  In this case we find that
$$|A|= k_1n/d \leq (d+1)/3 \cdot n/d  \leq (1+1/p) \cdot n/3,$$ as claimed.
However, if $n$ has no divisors congruent to $2$ mod $3$, then $k_1 \leq \lfloor d/3 \rfloor$, so
$$|A|= k_1n/d \leq \lfloor d/3 \rfloor \cdot n/d \leq \lfloor n/3 \rfloor,$$ which completes the proof. 
\end{proof}

In the rest of this section we determine $S(G,3)$ for each group $G$.  We start with the case when $|G|=n$ has prime divisors congruent to $2$ mod $3$ and $p$ is the smallest such divisor.  

\begin{theorem}  \label{S(G,3) when 2 mod 3 exists}

Suppose that $n$ has prime divisors congruent to $2$ mod $3$, and $p$ is the smallest such divisor.  Then $S(G,3)=\{n-n/p\}$.

\end{theorem}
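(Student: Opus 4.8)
The plan is to prove the two inclusions $S(G,3)\subseteq\{n-n/p\}$ and $n-n/p\in S(G,3)$ separately. For the first, let $A$ be a $3$-incomplete subset of maximum size; by Proposition~\ref{3-critical prop} this means $|A|=(p+1)n/(3p)$. Apply Lemma~\ref{lemma k_2 <=}: writing $H$ for the stabilizer of $3A$ and $d=[G:H]$, we have $|A|=k_1n/d$, $|3A|=k_2n/d$, $k_2\geq 3k_1-2$, and (since $3A\neq G$) $k_2\leq d-1$, so that $k_1\leq(d+1)/3$ --- the same chain of inequalities already extracted in the proof of Proposition~\ref{3-critical prop}.

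The crux is to determine $d$ exactly. From $k_1n/d=(p+1)n/(3p)$ we get $3pk_1=(p+1)d$. Since $p\equiv 2\pmod 3$, write $p+1=3t$ with $t=(p+1)/3$ a positive integer; then $pk_1=td$, and as $3t=p+1\equiv 1\pmod p$ we have $\gcd(p,t)=1$, forcing $p\mid d$, say $d=pe$. Substituting back gives $k_1=te$, and the bound $k_1\leq(d+1)/3$ becomes $3te\leq pe+1$, i.e.\ $(p+1)e\leq pe+1$, i.e.\ $e\leq 1$; hence $e=1$, $d=p$, and $k_1=t=(p+1)/3$. Now $k_2$ is squeezed between $3k_1-2=p-1$ and $d-1=p-1$, so $k_2=p-1$ and $|3A|=k_2n/d=(p-1)n/p=n-n/p$. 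This gives $S(G,3)\subseteq\{n-n/p\}$.

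For the reverse inclusion, note that $n-n/p$ is already realized by the construction appearing in the proof of Proposition~\ref{3-critical prop}: since $p\mid n$, $G$ has a subgroup $H$ of index $p$ with $G/H\cong\mathbb{Z}_p$, and taking $B$ to be an arithmetic progression of length $\lfloor(p+1)/3\rfloor=(p+1)/3$ in $G/H$ and $A=\phi^{-1}(B)$, Lemma~\ref{lemma phi inverse} gives $|A|=(p+1)n/(3p)$ (the maximum size of a $3$-incomplete set) and $|3A|=(3\cdot(p+1)/3-2)n/p=(p-1)n/p=n-n/p<n$. Hence $n-n/p\in S(G,3)$, and therefore $S(G,3)=\{n-n/p\}$.

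The main obstacle is the middle paragraph: one must rule out the stabilizer of $3A$ having index a proper multiple of $p$ (or any value other than $p$). This is precisely where the hypothesis $p\equiv 2\pmod 3$ enters --- through $3\mid p+1$ and $\gcd(p,p+1)=1$ --- combined with the inequality $k_1\leq(d+1)/3$, which is the only consequence we use of $3A$ being a proper subset of $G$. Everything after $d=p$ is a one-line computation with Lemma~\ref{lemma k_2 <=}.
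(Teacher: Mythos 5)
Your proposal is correct and takes essentially the same route as the paper: both use Lemma~\ref{lemma k_2 <=} to force the stabilizer index to be exactly $p$ (you extract $d=p$ from $k_1\le (d+1)/3$ together with $p\mid d$, the paper equivalently from $k_2\ge d+d/p-2\ge d-1$ with equality only for $d=p$), which yields $|3A|=n-n/p$. The only minor difference is the reverse inclusion, where the paper simply notes $S(G,3)\neq\emptyset$ by definition, while you redo the arithmetic-progression construction from Proposition~\ref{3-critical prop}; both are valid.
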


\begin{proof}  Suppose that $A$ is a 3-incomplete subset of maximum size in $G$.  Using the notations of Lemma \ref{lemma k_2 <=}, we have $|A|=(p+1)/3 \cdot n/p=k_1n/d$ where $d$ is the index of the stabilizer subgroup of $3A$.  This implies that $d$ is divisible by $p$.  Furthermore, $k_1=(p+1)/p \cdot d/3$; using Lemma \ref{lemma k_2 <=} again yields 
$$k_2 \geq 3k_1 - 2 = d+(d/p-2) \geq d-1,$$ with equality only if $d=p$.  Therefore, $|3A|$ equals $n$ or $n-n/p$, proving that $S(G,3) \subseteq \{n-n/p\}$.

As $S(G,3)\ne \emptyset$ (according to its definition), it is obtained that $S(G,3)=\{n-n/p\}$.

%To see that $n-n/p \in S(G,3)$, we recall from the proof of Proposition \ref{3-critical prop} that when $H$ is a subgroup of $G$ of index $p$, then an arithmetic progression in $G/H$ of size $(p+1)/3$ yields a set of size $(p+1)/3 \cdot n/p$ in $G$ whose 3-fold sumset has size
%$n-n/p$. 

\end{proof}

As a special case of Theorem~\ref{S(G,3) when 2 mod 3 exists}, we see that when the order $n$ of $G$ is odd but divisible by 5, then a 3-incomplete subset of maximum size $0.4n$ in $G$ consists of two cosets of a subgroup of index 5.  It is worth mentioning that, according to a result of Lev in \cite{Lev:2016a}, if $G$ is an elementary abelian 5-group, then any 3-incomplete subset of size at least $0.3n$ is contained in a union of two cosets of a subgroup of index $5$. 

Next, we address the case when the order $n$ of $G$ is divisible by 3 but has no divisors that are congruent to 2 mod 3.

\begin{theorem}  \label{thm S(G,3) when 3|n but no 2 mod 3}
Suppose that $n$ is divisible by $3$ but has no prime divisors congruent to $2$ mod $3$.   We then have
$$S(G,3) = \left \{ n-n/d \; : \; d|n, \; 3|d, \; d \neq 3 \right \} \cup \left \{ n-2n/d  \; : \; d|n, \; 1 \leq \nu_3(d) \leq \nu_3(\kappa) \right \},$$
where $\kappa$ is the exponent of $G$, and $\nu_3(t)$ is the highest power of 3 that divides the integer $t$.

\end{theorem}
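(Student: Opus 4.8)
The plan is to split the argument into the usual two halves: first establish that $S(G,3)$ is contained in the stated set (the upper-bound direction), then show every listed value is realized by an explicit construction (the lower-bound direction). For the containment, I would run the Kneser bookkeeping exactly as in Theorem \ref{S(G,3) when 2 mod 3 exists}: by Proposition \ref{3-critical prop} a $3$-incomplete set of maximum size has $|A| = \lfloor n/3 \rfloor = n/3$ (here $3 \mid n$), and by Lemma \ref{lemma k_2 <=} we have $|A| = k_1 n/d$, $|3A| = k_2 n/d$ with $k_2 \ge 3k_1 - 2$, $k_2 \le d-1$, where $d$ is the index of the stabilizer $H$ of $3A$. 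From $n/3 = k_1 n/d$ we get $d = 3k_1$, so $3 \mid d$; then $k_2 \ge 3k_1 - 2 = d - 2$, so $k_2 \in \{d-2, d-1\}$ (the value $d$ being excluded since $3A \ne G$), giving $|3A| \in \{n - n/d,\, n - 2n/d\}$. The case $d = 3$ forces $k_1 = 1$, i.e. $A$ is a single coset of $H$, whence $3A = A$ has size $n/3 = n - 2n/3$, so $n - n/3$ does not occur for $d=3$; this explains why $d \ne 3$ is attached to the first family. The subtle point is the extra restriction $\nu_3(d) \le \nu_3(\kappa)$ on the second family: when $k_2 = d-2$, Kneser holds with equality $|3A| = 3|A| - 2|H|$, and I expect one must show that equality in Kneser's theorem for a threefold sumset $A+A+A$ lying in $d-2$ cosets of $H$ with $G/H$ of type forcing $\nu_3$ of its exponent to be at least $\nu_3(d)$ — in other words, $3A$ being two cosets short of all of $G/H$ forces $A/H$ to essentially be a length-$k_1$ arithmetic progression in $G/H$, which cannot exist unless $G/H$ has an element of order a multiple of $\nu_3(d)$'s worth of $3$-power. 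This is the main obstacle and likely requires the Vosper/Freiman-type rigidity for equality in Kneser for $h=3$ over cyclic-ish groups, or a direct combinatorial argument peeling off one coset at a time.

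For the lower-bound direction, I would reduce to quotients via Lemma \ref{lemma phi inverse}: to realize $n - n/d$ it suffices, for each divisor $d$ of $n$ with $3 \mid d$ and $d \ne 3$, to exhibit in some group $K$ of order $d$ a set $B$ of size $d/3$ with $|3B| = d - 1$; and to realize $n - 2n/d$ it suffices, for each $d \mid n$ with $1 \le \nu_3(d) \le \nu_3(\kappa)$, to exhibit in a suitable quotient $G/H \cong K$ of order $d$ a set $B$ with $|B| = d/3$ and $|3B| = d-2$. For $|3B| = d-2$ the natural candidate is the initial segment $\mathcal I(K, d/3)$ when $K$ has a cyclic factor whose order is the relevant $3$-power (this is where $\nu_3(d) \le \nu_3(\kappa)$ enters: we need $G$ itself, not merely an abstract group of order $d$, to have such a quotient, so the $3$-part of $d$ must embed in the $3$-part of the exponent), with Proposition \ref{initial segments}(1) giving $|3\mathcal I| = 3(d/3) - 2 = d-2$. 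For $|3B| = d - 1$ I would instead use a one-element perturbation of an initial segment, along the lines of $\mathcal I^\ast$ or of the construction in Theorem \ref{h=2,even}: take $d/3 - 1$ elements forming an initial segment inside a subgroup of index $3$, whose threefold sum already fills that subgroup, plus one carefully chosen extra element in an outer coset, and check the threefold sumset misses exactly one point. The small cases where $d/3 < 3$ (namely $d = 6$, and possibly $d = 9$, $12$) need to be handled by hand, mirroring the elementary-abelian exceptions in Theorems \ref{h=2,odd} and (presumably) \ref{thm S(G,3) when all divisors are 1 mod 3}.

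A clean way to organize the write-up is: (i) a paragraph deriving $S(G,3) \subseteq \{n-n/d\} \cup \{n-2n/d\}$ with the $d\ne3$ and $\nu_3$ conditions, quoting the Kneser-equality rigidity as a lemma; (ii) the construction for $n - 2n/d$ via initial segments in the $3$-power quotient; (iii) the construction for $n - n/d$ via perturbed initial segments; (iv) verification of the handful of small $d$. I expect steps (i) and (iii) to absorb essentially all the work — the $\nu_3$ condition in (i) because it is exactly the place where the theorem's statement is most delicate, and (iii) because producing a threefold sumset that misses *precisely one* coset (rather than two) is a genuinely tighter constraint than the $h=2$ analogue and the obvious initial-segment construction overshoots.
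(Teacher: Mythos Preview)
Your overall architecture matches the paper's proof: the Kneser bookkeeping gives $S(G,3) \subseteq \{n - n/d,\ n - 2n/d : 3 \mid d,\ d \mid n\}$; the $d=3$ case is handled exactly as you describe; the construction for $n - 2n/d$ via initial segments in a suitably chosen quotient is what the paper does (Claim~5); and the construction for $n - n/d$ (Claim~2) is precisely your ``remove one element from an index-$3$ subgroup and add one element from an outer coset'' idea --- no initial segment is needed, and there are no small cases to handle: since $n$ is odd the divisors $d=6,12$ cannot occur, so $d \ge 9$ and $|H \setminus \{h\}| = d/3 - 1 > d/6$ already gives $2(H \setminus \{h\}) = H$.

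The genuine gap is in the exclusion $\nu_3(d) > \nu_3(\kappa) \Rightarrow n - 2n/d \notin S(G,3)$. There is no off-the-shelf Vosper or Freiman rigidity for equality in Kneser's theorem at $h=3$ over general abelian groups, and ``$A/H$ is essentially a length-$k_1$ arithmetic progression'' is not what one proves. The paper's argument (Claim~4) is elementary but uses a specific trick you have not identified. After checking that the stabilizer of $3A$ really has index $d$ (a priori it could be smaller), pass to $G' = G/H$, so $|A'| = d/3$ and $G' \setminus 3A' = \{x,y\}$. The key observation is that both $x - A'$ and $y - A'$ lie in $G' \setminus 2A'$, which has size at most $d/3 + 1$ by Kneser applied to $2A'$; hence $|(x-A') \cap (y-A')| \ge |A'| - 1$, equivalently $|A' \cap (A' + \ell)| \ge |A'| - 1$ where $\ell = x - y$. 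This forces $A'$ to be a union of arithmetic progressions of common difference $\ell$, at most one of which has length less than $k := |\langle \ell \rangle|$. Now the hypothesis $\nu_3(d) > \nu_3(\kappa)$ enters: since $k$ divides the exponent of $G'$, which divides $\kappa$, one gets $\nu_3(k) < \nu_3(d)$ and hence $k \mid (d/3) = |A'|$; this forces every progression to be a full $\langle \ell \rangle$-coset. Then $3A'$ is also a union of such cosets, so $k \mid (d-2)$; combined with $k \mid d$ and $k$ odd this gives $k=1$, contradicting $x \ne y$. This ``two missing points'' device --- comparing $x-A'$ and $y-A'$ inside the small complement of $2A'$ --- is the idea your outline is missing.
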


\begin{proof} By Proposition \ref{3-critical prop}, the maximum size of a 3-incomplete subset of $G$ in this case is $n/3$.   We provide the proof through several claims.

\bigskip

\noindent {\bf Claim 1:} $S(G,3) \subseteq \left \{ n-cn/d \; : \; d|n, \; 3|d, \; c = 1,2 \right \}.$

\medskip

\noindent {\em Proof of Claim 1:}  Using the notations of Lemma \ref{lemma k_2 <=}, we have $|A|=n/3=k_1n/d$ where $d$ is the index of the stabilizer subgroup of $3A$.  This implies that $d$ is divisible by $3$ and $k_1=d/3$; using Lemma \ref{lemma k_2 <=} again yields $k_2 \geq d-2$ and thus $|3A| = k_2n/d$ equals $n$, $ n -n/d$, or $n-2n/d$, proving our claim.

\bigskip

\noindent {\bf Claim 2:}  If $d$ is a divisor of $n$ that is divisible by 3 and $d \neq 3$, then $n-n/d \in S(G,3)$.

\medskip

\noindent {\em Proof of Claim 2:}  By Lemma \ref{lemma phi inverse}, it suffices to prove that all groups $K$ of order $d$ with $3|d$ and $d>3$ contain some subset $A$ of size $d/3$ for which $|3A|=d-1$.  Let $H$ be any subgroup of index 3 in $K$, and set $A=(H \setminus \{h\}) \cup \{g\}$, where $h$ and $g$ are arbitrary elements of $H$ and $K \setminus H$, respectively.  Note that $d \neq 6$ since $d$ has no divisors congruent to $2$ mod $3$, and thus we have $d \geq 9$.  Therefore, $|H \setminus \{h\}|=d/3-1 > d/6$, so $2(H \setminus \{h\})=H$ and $3(H \setminus \{h\})=H$.  But then 
$$3A=3(H \setminus \{h\}) \cup ((2(H \setminus \{h\}) +g) \cup ((H \setminus \{h\})+2g)=G \setminus \{h+2g\}.$$  Therefore, $|3A|=d-1$, as claimed. 

\bigskip

\noindent {\bf Claim 3:}  We have $2n/3 \not \in S(G,3)$.

\medskip

\noindent {\em Proof of Claim 3:}  As before, we see that $A$ is the union of $k_1=d/3$ cosets of $H$ and $3A$ is the union of $k_2 \geq d-2$ cosets of $H$, where $d$ is the index of the stabilizer subgroup $H$ of $3A$.  But $2n/3=k_2n/d \geq (d-2)n/d$ yields $d \leq 6$, and since $d$ is odd and is divisible by 3, this can only happen if $d=3$.  Therefore, $k_1=1$ and thus $k_2=1$ as well, which gives $|3A|=n/3$.

\bigskip

\noindent {\bf Claim 4:}  If $d$ is a divisor of $n$ for which $\nu_3(d) > \nu_3(\kappa)$, then $n-2n/d \not \in S(G,3)$.

\medskip

\noindent {\em Proof of Claim 4:}  
For the sake of a contradiction, let us assume that  $A$ is a subset of $G$ of size $n/3$ and  $|3A|=n-2n/d$.

Suppose that $H$ is the stabilizer of $3A$ and that $H$ has index $\delta$ in $G$; we will first show that $\delta=d$. According to Lemma~\ref{lemma k_2 <=}, the set $A$ is the union of $k_1=\delta/3$ cosets of $H$, and $3A$ is the union of $\delta-2\delta/d=k_2\geq 3k_1-2$ cosets of $H$. Hence, $d \geq \delta$ and $d$ divides $2\delta$, thus $d$ is either $\delta$ or $2\delta$; since $n$ is odd, we obtain $d=\delta$.

Let $\phi$ be the canonical map from $G$ to $G/H$.  With the notations $G'=G/H$ and 
$A'=\phi(A)$, we then have $|G'|=d$, $|A'|=d/3$, and $|3A'|=d-2$. 

We let $\{x,y\}= G'\setminus (3A')$, and note that $x-A'\subseteq G'\setminus 2A'$ and $y-A'\subseteq G'\setminus 2A'$. 
Since the stabilizer of $3A'$ in $G'$ is trivial, so is the stabilizer of $2A'$, and thus by Kneser's Theorem we have 
$$|G'\setminus 2A'|\leq |G'|-2|A'|+1=d/3+1.$$  This means that $x-A'$ and $y-A'$ have at least $d/3-1$ elements in common.  

Now let $\ell=x-y$, $K=\langle \ell \rangle$, and $|K|=k$.  Since $$|A' \cap (A'+\ell)| =|(x-A') \cap (y-A')| \geq |A'|-1,$$  
$A'$ is the union of arithmetic progressions, each of difference $\ell$, and at most one of them has size less than $k$.  
According to our assumption, $\nu_3(d) > \nu_3(\kappa)$,  so $d/3$ is divisible by $k$, which then means that 
$A'$ is the union of full cosets of $K$.  Therefore, $3A'$ is the union of full cosets of $K$ as well, and thus $d-2$ is divisible by $k$.  But then $k \leq 2$, and thus $k=1$ since $k$ is odd, which is a contradiction if $x \neq y$.    

\bigskip

\noindent {\bf Claim 5:}  If $d$ is a divisor of $n$ for which $1 \leq \nu_3(d)\leq  \nu_3(\kappa)$, then $n-2n/d \in S(G,3)$.

\medskip

\noindent {\em Proof of Claim 5:}  Suppose that $G$ is of type $(n_1,\dots,n_r)$; we can then find positive integers $d_1, \dots, d_r$ so that $d_i | n_i$ for each $i=1, \dots, r$; $d_1 \cdots d_r=d$; and $d_1, \dots, d_{r-1}$ are all congruent to $1$ mod $3$.  We then have
$$\frac{d}{3} = \sum_{k=1}^{r-1} \frac{d_k-1}{3} d_{k+1} \cdots d_r + \frac{d_r}{3}.$$

Let $H$ be a subgroup of $G$ so that $K=G/H$ is of type $(d_1,\dots,d_r)$.  According to Proposition~\ref{initial segments}, the initial segment ${\mathcal I}(K,d/3)$ of size $d/3$ has 3-fold sumset of size $d-2$.  
By Lemma \ref{lemma phi inverse}, $G$ then contains a subset of size $n/3$ whose $3$-fold subset has size $n-2n/d$.  

This completes the proof of Theorem \ref{thm S(G,3) when 3|n but no 2 mod 3}.  
\end{proof}

For our final case, we consider groups whose order $n$ only has divisors that are congruent to 1 mod 3.  Our previous techniques work well for all groups in this category, other than elementary abelian 7-groups, so we consider those separately.

\begin{theorem}  \label{thm S(G,3) when all divisors are 1 mod 3}
If all divisors of the order $n$ of $G$ are congruent to $1$ mod $3$, but $G$ is not isomorphic to an elementary abelian 7-group, then $S(G,3)=\{n-3,n-1\}$.

\end{theorem}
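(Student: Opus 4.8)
The plan is to follow the same two-sided strategy used in the earlier theorems: first establish the inclusion $S(G,3) \subseteq \{n-3, n-1\}$ via Kneser's Theorem, then exhibit explicit maximum 3-incomplete sets realizing each of the values $n-3$ and $n-1$. By Proposition \ref{3-critical prop}, in this case the maximum size of a 3-incomplete subset is $(n-1)/3$. For the upper-bound inclusion, I would argue exactly as in the proof of Theorem \ref{h=2,odd}: let $A$ be a 3-incomplete subset of maximum size, let $H$ be the stabilizer of $3A$ with index $d$, so that by Lemma \ref{lemma k_2 <=} we have $(n-1)/3 = |A| = k_1 n/d$ with $k_2 \geq 3k_1 - 2$ and $k_2 \leq d-1$. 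Since $d \mid n$ and $k_1 = (n-1)/(3 \cdot n/d)$ must be an integer, one deduces $d = n$ (as $n/d$ divides $n$ and also divides $(n-1)/3 \cdot 3 = n-1$, forcing $n/d = 1$), hence $k_1 = (n-1)/3$ and $k_2 \geq n-3$. So $|3A| \in \{n-3, n-2, n-1\}$. To rule out $n-2$, I expect to use a Kneser/Vosper-type argument: if $|3A| = n-2$ with trivial stabilizer, then writing $G' = G$, $A' = A$, and $\{x,y\} = G \setminus 3A$, the same manipulation as in Claim 4 of Theorem \ref{thm S(G,3) when 3|n but no 2 mod 3} shows that $x - A$ and $y - A$ overlap in at least $(n-1)/3 - 1$ elements, so $A$ is covered by arithmetic progressions of common difference $\ell = x - y$; since $\langle \ell \rangle$ has order dividing $n$, hence $\equiv 1 \bmod 3$, while $|A| = (n-1)/3$ is $\equiv 0 \bmod 3$ only relative to... — actually the cleaner route is that $A$ being (almost) a union of cosets of $K = \langle \ell \rangle$ forces $|K| \mid$ both $(n-1)/3$-related and $(n-2)$-related quantities, squeezing $|K|$ to be small, and then a direct check eliminates the exceptional small case, using that $G$ is not an elementary abelian 7-group. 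This "ruling out $n-2$" step is the main obstacle and must be handled with care about which groups survive; the elementary abelian 7-group is exactly the configuration where the argument would otherwise fail (there $G = \mathbb{Z}_7^r$, $|A| = (7^r-1)/3$, and the arithmetic progression structure is consistent).

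For the realizability of $n-3$: this was essentially already shown inside the proof of Proposition \ref{3-critical prop}, which notes that the initial segment $\mathcal{I}(G, (n-1)/3)$ has a 3-fold sumset of size $n-3$; I would simply cite that. For the realizability of $n-1$: I would use the modified initial segment $\mathcal{I}^\ast(G, (n-1)/3)$ from Proposition \ref{initial segments}(2), which has size $(n-1)/3$ and 3-fold sumset of size $n-1$ — provided the hypothesis $q_r \geq 3$ of that proposition is met, i.e. $(n_r - 1)/3 \geq 3$, equivalently $n_r \geq 13$. This covers all groups whose exponent is at least $13$ among the relevant residue class (so $n_r \in \{13, 19, 25, 31, \dots\}$; note $n_r$ must be $\equiv 1 \bmod 3$). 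The remaining groups have exponent $n_r \in \{7\}$ (since $4, 10$ are even and excluded, and $n_r$ must have all divisors $\equiv 1 \bmod 3$, so $n_r = 7$ is the only small possibility other than trivial), i.e. $G$ is an elementary abelian 7-group — precisely the excluded case. So in fact the dichotomy is clean: either $n_r \geq 13$ and $\mathcal{I}^\ast$ works directly, or $G = \mathbb{Z}_7^r$ and we are in the excluded case handled by Corollary \ref{cor-z7n}.

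The one subtlety I would double-check is whether every group $G$ in this theorem's hypothesis has exponent $n_r \geq 13$ once we exclude elementary abelian 7-groups; since all divisors of $n$ are $\equiv 1 \bmod 3$, the prime divisors of $n$ lie in $\{7, 13, 19, 31, \dots\}$ (all primes $\equiv 1 \bmod 3$), and if $7 \mid n_r$ with $n_r > 7$ then $n_r \geq 7^2 = 49 \geq 13$ or $n_r \geq 7 \cdot 13 \geq 13$; if $7 \nmid n_r$ then $n_r \geq 13$; and if $n_r = 7$ for every... no, if the exponent is exactly $7$ then $n_r = 7$ and all $n_i = 7$, so $G = \mathbb{Z}_7^r$, the excluded case. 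Hence in the non-excluded case $n_r \geq 13$, so $(n_r-1)/3 \geq 4 \geq 3$ and Proposition \ref{initial segments}(2) applies, giving $n-1 \in S(G,3)$. Combined with $n-3 \in S(G,3)$ and the inclusion $S(G,3) \subseteq \{n-3, n-1\}$, this yields $S(G,3) = \{n-3, n-1\}$, completing the proof. The genuinely delicate part remains the exclusion of $n-2$, where I anticipate needing the Kneser-overlap argument together with a small-order case analysis invoking the hypothesis that $G \not\cong \mathbb{Z}_7^r$.
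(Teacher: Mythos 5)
Your overall architecture matches the paper's: the Kneser-based inclusion $S(G,3)\subseteq\{n-3,n-2,n-1\}$ (stabilizer index must be $n$, so $k_2\geq n-3$), the initial segment ${\mathcal I}(G,(n-1)/3)$ realizing $n-3$, and ${\mathcal I}^\ast(G,(n-1)/3)$ realizing $n-1$, with the exponent dichotomy (exponent $\geq 13$ versus $G\cong\mathbb{Z}_7^r$) being exactly the point where the non-$\mathbb{Z}_7^r$ hypothesis enters; this part of your proposal is sound and is what the paper does.

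The genuine gap is the exclusion of $n-2$, which you only sketch, and the sketch as written would not go through. First, the overlap bound is $|A|-2$, not $|A|-1$: with $|A|=(n-1)/3$ one gets $|G\setminus 2A|\leq n-2|A|+1=|A|+2$, hence $|(x-A)\cap(y-A)|\geq |A|-2$, so $A$ is a union of progressions of difference $\ell=x-y$ with up to \emph{two} incomplete ones. Second, your hope of ``squeezing $|K|$ to be small'' fails: since $A$ need not be a union of full cosets of $K=\langle\ell\rangle$, no divisibility argument bounds $k=|K|$, and $k$ can be any divisor of $n$. The paper instead runs a three-way case analysis according to how the incomplete progressions sit in cosets of $K$; the two easy cases are killed by congruences mod $k$, but the hard case --- two incomplete progressions inside the same coset --- reduces to showing that disjoint intervals $I_1,I_2\subseteq\mathbb{Z}_k$ with $|I_1|+|I_2|=(k-1)/3$ never satisfy $|3(I_1\cup I_2)|=k-2$, and this requires an explicit multi-subcase computation with interval endpoints (including the special subcase $k=7$). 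Nothing in your proposal supplies this. Finally, your anticipation that the $n-2$ exclusion needs $G\not\cong\mathbb{Z}_7^r$ is misplaced: the paper's Claim 3 holds for every group all of whose divisors are $\equiv 1 \bmod 3$, elementary abelian $7$-groups included (for which $S(G,3)=\{n-3\}$); the hypothesis is needed only to realize $n-1$, exactly where you in fact used it.
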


\begin{proof}
By Proposition \ref{3-critical prop}, the maximum size of a 3-incomplete subset of $G$ in this case is $(n-1)/3$.   We provide the proof through the following three claims.

\bigskip

\noindent {\bf Claim 1:} $S(G,3) \subseteq \left \{ n-3, n-2, n-1 \right \}.$

\medskip

\noindent {\em Proof of Claim 1:}  Using the notations of Lemma \ref{lemma k_2 <=}, we have $|A|=(n-1)/3=k_1n/d$ where $d$ is the index of the stabilizer subgroup of $3A$.  This implies that $d$ is divisible by $n$ and thus $d=n$ and $k_1=(n-1)/3$; using Lemma \ref{lemma k_2 <=} again yields $k_2 \geq n-3$, as claimed.

\bigskip

\noindent {\bf Claim 2:}  We have $\{n-3 , n-1 \} \subseteq S(G,3)$.

\medskip

\noindent {\em Proof of Claim 2:}  Suppose that $G$ is of type $(n_1,n_2, \dots, n_r)$.  Since $n_1, \dots, n_r$ are all congruent to 1 mod 3, we have
$$\frac{n-1}{3}=\sum_{k=1}^r \frac{n_k-1}{3} \cdot n_{k+1} \cdots n_r.$$  Therefore, Proposition \ref{initial segments} yields that $n-3 \in S(G,3)$ and, since $n_r \geq 10$, $n-1 \in S(G,3)$ as well.

\bigskip

\noindent {\bf Claim 3:}  We have $n-2 \not \in S(G,3)$.
 
 \medskip

\noindent {\em Proof of Claim 3:}  
Suppose that $A$ is a subset of $G$ of size $(n-1)/3$,  and assume indirectly that $3A=G\setminus\{x,y\}$ with some $x, y \in G$, $x\ne y$.  

According to Lemma~\ref{lemma k_2 <=}, the size of the stabilizer of $3A$ divides both $|A|=(n-1)/3$ and $|3A|=n-2$, therefore it is trivial. 
Then so is the stabilizer of $2A$, so by Kneser's Theorem, 
$$|G\setminus 2A|\leq |G|-2|A|+1=|A|+2.$$  Since $x-A$ and $y-A$ are both of size $(n-1)/3$ and are subsets of $G\setminus 2A$, this then means that they must have at least $|A|-2$ elements in common.

Now let $\ell=x-y$, $K=\langle \ell \rangle$, and $|K|=k$.  Since $$|A \cap (A+\ell)| =|(x-A) \cap (y-A)| \geq |A|-2,$$  
$A$ is the union of arithmetic progressions, each of difference $\ell$, and at most two of them have size less than $k$.  Furthermore, note that $(n-1)/3  \equiv (k-1)/3$ mod $k$.   
Therefore, we have three possibilities:
 \begin{enumerate} 
\item $A$ is the union of some complete cosets of $K$ and an arithmetic progression of size $(k-1)/3$;

\item $A$ is the union of some complete cosets of $K$ and two arithmetic progressions that are in different cosets of $K$, and the sizes of these two arithmetic progressions add to $(k-1)/3$ or $k+(k-1)/3$; or

\item $A$ is the union of some complete cosets of $K$ and two (disjoint) arithmetic progressions that are in the same coset of $K$, and the sizes of these two arithmetic progressions add to $(k-1)/3$.
\end{enumerate}

We can quickly rule out the first case as that would lead to $|3A| \equiv k-3$ mod $k$, contradicting $|3A|=n-2$.

For the second case, suppose that the two arithmetic progressions that are not full cosets of $K$ are $B_1$ and $B_2$, with $|B_1|=r_1$ and $|B_2|=r_2$.
Observe that if $B_1$ and $B_2$ are within distinct cosets of $K$, then so are $3B_1, 2B_1+B_2, B_1+2B_2,$ and $3B_2$.  When $r_1+r_2=(k-1)/3$, then each of these four sumsets have size less than $k$, so we have 
$$n-2=|3A| \equiv |3B_1|+|2B_1+B_2|+|B_1+2B_2|+|3B_2|=6(r_1+r_2)-8 \equiv -10$$ mod $k$.    
This implies that $8$ is divisible by $k$, and since $k>1$, this means that $k$ is even, which is not possible since $k$ is odd.  If $r_1+r_2=k+(k-1)/3$, 
then at least three of the sets $3B_1, 2B_1+B_2, B_1+2B_2,$ and $3B_2$ have size $k$. Indeed, by symmetry we may assume that we have $r_1 \geq r_2$, in which case
$$3r_1-2 \geq 2r_1+r_2-2 \geq r_1+2r_2-2 =k+(k-1)/3+r_2-2 \geq k.$$  Therefore, if $3r_2 -2 < k$, then $n-2=|3A| \equiv 3r_2 - 2$ mod $k$, but that is a contradiction, since $r_2$, and therefore $3r_2$, is not divisible by $k$, and if $3r_2 -2 \geq k$, then $n-2=|3A| \equiv 0$ mod $k$, contradicting that $k>1$ is odd.

Let us now turn to case (3), where $A$ contains arithmetic progressions $B_1$ and $B_2$ that are in the same coset of $K$ and have a combined size of $(k-1)/3$.  It suffices to show that it is not possible that $3(B_1\cup B_2)$ has size $k-2$, and this can be accomplished by proving that if $I_1$ and $I_2$ are disjoint intervals in the cyclic group $\mathbb{Z}_k$ with $|I_1|+|I_2|=(k-1)/3$, then $|3(I_1 \cup I_2)| \neq k-2$.  

Without loss of generality, we may assume that 
$$I_1=\{0,1,\dots,r_1-1\}$$ and $$I_2=\{s,s+1,\dots,s+r_2-1\}$$ for some positive integers $r_1, r_2$, and $s$ with $r_1+r_2=(k-1)/3$, $r_1 \geq r_2$, and $r_1+1 \leq s \leq k-r_2-1$.  Also, we may further assume that $ s\leq (k-1)/3 +r_1$, which holds when among the two gaps between $I_1$ and $I_2$, the size of $\{r_1,r_1+1\dots,s-1\}$ is at most as much as the size of $\{s+r_2, s+r_2+1, \dots, k-1\}$.

The set $3(I_1+I_2)$ is the union of four intervals:
$$3I_1=\{0,1,\dots,3r_1-3\},$$
$$2I_1+I_2=\{s,s+1,\dots,s+2r_1+r_2-3\},$$
$$I_1+2I_2=\{2s,2s+1,\dots,2s+r_1+2r_2-3\},$$
$$3I_2=\{3s,3s+1,\dots,3s+3r_2-3\}.$$

Now if $r_1+1 \leq s\leq (k-1)/3+ r_2-2$, then there is no gap between these intervals, thus they cover (as integer intervals) $[0,3s+3r_2-3]$. Since 
$$3s+3r_2-3\geq 3(r_1+1)+3r_2-3=k-1,$$ all elements of $\mathbb{Z}_k$ are covered.

If $(k-1)/3 + r_2-1 \leq s\leq (k-1)/3 + r_1-2$, then there is no gap between the first three intervals, so their union is $[0,2s+r_1+2r_2-3]$.  Here we have 
$$2s+r_1+2r_2-3 \geq 2(k-1)/3+2r_2-2+r_1+2r_2-3=k+3r_2-6\geq k-3.$$ 
If either of the inequalities is a strict inequality, then the union of these three intervals covers $\mathbb{Z}_k$ with the exception of at most one element.  On the other hand, if both inequalities are equalities, then we have $s=(k-1)/3$, $r_1=(k-4)/3$, and $r_2=1$; in this case we have $3(I_1 \cup I_2)=\mathbb{Z}_k \setminus \{k-2\}$.

If $(k-1)/3 + r_1-1 \leq s$, then either $s=(k-1)/3+r_1-1$ or $s=(k-1)/3+r_1$.  Note that if $r_1 \geq (k-1)/6+1$, then 
$s\leq (k-1)/3+r_1 \leq 3r_1-2,$ which means that there is no gap between the first two intervals, and thus they cover $[0,s+2r_1+r_2-3]$.  If we also have $s+r_1 \geq 2(k-1)/3+2$, then 
$$s+2r_1+r_2-3 \geq 2(k-1)/3+2+(k-1)/3-3=k-2,$$ and thus all elements of $\mathbb{Z}_k$ are covered with the possible exception of $k-1$.  If we still have $r_1 \geq (k-1)/6+1$ but $s+r_1 \leq 2(k-1)/3+1$, then we must have $r_1=(k-1)/6+1$ and $s=(k-1)/2$, so the first two intervals cover $[0,k-3]$, but the third interval includes $k-1$, and thus all elements of $\mathbb{Z}_k$ are covered with the possible exception of $k-2$.

This leaves us with only the cases when $r_1=r_2=(k-1)/6$, and $s=(k-1)/3+r_1-1=(k-3)/2$ or $s=(k-1)/2$. In the first case, we can compute that, as a set of integers, $3(I_1 \cup I_2)$ equals $$[0,2k-8] \setminus \{i(k-3)/2-1 \; : \; i=1,2,3 \}.$$  
For $k=7$, this means that $3(I_1 \cup I_2)=\{0,2,4,6\}$, so $|3(I_1 \cup I_2)| \neq k-2$.  When $k>7$, then $k+(k-3)/2-1$ is between $3(k-3)/2-1$ and $2k-8$, so we find that $3(I_1 \cup I_2)=\mathbb{Z}_k \setminus \{k-4\}$.  

The remaining case is when $r_1=r_2=(k-1)/6$ and $s=(k-1)/2$, in which case $I_1\cup I_2$ is an arithmetic progression with starting element $(k-1)/2$ and difference $(k+1)/2$, so $|3(I_1 \cup I_2)|=k-3$. 
\end{proof}

The only groups left to treat are the elementary abelian 7-groups, and they require considerable attention.  Our result will follow easily from the following structure theorem.

\begin{theorem}\label{thm-Z7r}
Let $r$ be a positive integer.  Suppose that $A$ is a subset of $G=\mathbb{Z}_7^r$ of size $(7^r-1)/3$ and $0 \not \in 3A$.  Then there is an ascending chain of subgroups $$\{0\} = H_0 < H_1 < \cdots < H_r = G$$ and elements
$$a_0,a_0'\in H_1,\quad a_k \in H_{k+1} \setminus H_k\ \; \mbox{for} \; k=1,\dots,r-1,$$
such that
$$A =\{a_0,a_0'\} \cup \bigcup\limits_{k=1}^{r-1}  \left(\{a_k, 2a_k\} + H_k \right).$$

\end{theorem}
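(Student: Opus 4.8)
The plan is to induct on $r$. For $r=1$ the statement is exactly the assertion that a subset $A$ of $\mathbb{Z}_7$ of size $2$ with $0 \notin 3A$ must be $\{a_0, a_0'\}$ for some $a_0, a_0' \in \mathbb{Z}_7 = H_1$; this is vacuous since any two elements work, so the base case is immediate. For the inductive step, suppose $A \subseteq G = \mathbb{Z}_7^r$ has size $(7^r-1)/3$ and $0 \notin 3A$. Since $0 \notin 3A$ in particular $3A \neq G$, and $|A| = (7^r-1)/3 = (n-1)/3$ is the maximum size of a $3$-incomplete set by Proposition~\ref{3-critical prop}; so $A$ is a $3$-incomplete set of maximum size and Lemma~\ref{lemma k_2 <=} applies. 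As computed in Claim~1 of Theorem~\ref{thm S(G,3) when all divisors are 1 mod 3}, the stabilizer of $3A$ is trivial, hence so is the stabilizer of $2A$, and Kneser gives $|G \setminus 2A| \leq |A| + 2$. Writing $G \setminus 3A \supseteq \{0\}$, I would like to pin down $G \setminus 3A$ more precisely: we know $|3A| \geq n-3$, so $G \setminus 3A$ has size $1$, $2$, or $3$. The first step is to show $0$ lies in a very constrained position — ideally, to identify a distinguished element (or pair) and a line through it.

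The heart of the argument should mirror the "arithmetic progressions of common difference $\ell$" analysis in Claim~3 of Theorem~\ref{thm S(G,3) when all divisors are 1 mod 3}, but pushed to extract a subgroup rather than just a contradiction. Concretely: pick $x, y \in G \setminus 3A$ with $x \neq y$ (possible since $|G\setminus 3A|\ge 2$ unless $|3A|=n-1$, a case I would handle separately using $0 \in G\setminus 3A$ together with a second forced omitted element from the Kneser count on $2A$), set $\ell = x-y$, $K = \langle \ell\rangle \cong \mathbb{Z}_7$. Then $x - A$ and $y - A$ both avoid $2A$, and from $|G\setminus 2A| \le |A|+2$ they overlap in at least $|A|-2$ elements, so $|A \cap (A+\ell)| \geq |A|-2$; thus $A$ is a union of $K$-cosets together with at most two short arithmetic progressions of difference $\ell$, the short parts totaling $(7-1)/3 = 2$ points mod $7$ (using $(7^r-1)/3 \equiv 2 \pmod 7$). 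I expect the $\mathbb{Z}_7$-specific combinatorics — enumerating how these $\le 2$ leftover points can sit — to force that $A$, modulo $K$, is again a set of the required structural type in $G/K$ but now with the quirk that the "defect" $\{a_0, a_0'\}$ at the bottom already costs the $2$ stray points, so that $A$ itself is $K$-periodic except on one coset. Set $H_1 = K$; then $\phi(A) \subseteq G/K \cong \mathbb{Z}_7^{r-1}$ should satisfy the hypotheses of the theorem for $r-1$ after checking $0 \notin 3\phi(A)$ (which follows from $0 \notin 3A$ and $K$-periodicity outside the exceptional coset, plus a short check that the exceptional coset's contribution cannot create $0 \in 3\phi(A)$), and the induction produces a chain $\{0\} < H_1/H_1 < \cdots$ in $G/K$ which pulls back to the desired chain in $G$, with the generators $a_k$ and the bottom data $a_0, a_0'$ read off from the lift.

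The main obstacle I anticipate is the case analysis extracting $K$-periodicity: unlike in Claim~3, where any of the three leftover-configurations quickly yields a contradiction, here I must show each configuration either is impossible or forces $A$ to be $K$-periodic outside one coset, and I must simultaneously track the omitted set $G \setminus 3A$ finely enough (its size and whether $0$ is forced into a specific relative position) to guarantee the reduced set in $G/K$ still has $0 \notin 3(\cdot)$ and still has the extremal size $(7^{r-1}-1)/3$. A secondary subtlety is the boundary between $|3A| = n-3$, $n-2$, $n-1$: the value $n-2$ was excluded in Theorem~\ref{thm S(G,3) when all divisors are 1 mod 3} only for groups that are \emph{not} elementary abelian $7$-groups, so here I cannot assume $|3A| \in \{n-3, n-1\}$ and must allow $|3A| = n-2$; I expect this case to be precisely the one yielding the $\{a_0, a_0'\}$-with-$a_0 \neq a_0'$ configurations, and it will need its own careful treatment of which two points are omitted and how they interact with the line $K$. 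Once the periodicity reduction is in hand, assembling the chain and verifying the explicit form $A = \{a_0, a_0'\} \cup \bigcup_{k=1}^{r-1}(\{a_k, 2a_k\} + H_k)$ should be routine bookkeeping, using that $\{a_k, 2a_k\} + H_k$ is exactly the preimage under $G/H_k \to G/H_{k+1}$ of the length-$2$ arithmetic progression $\{a_k, 2a_k\}$ in the rank-one quotient $H_{k+1}/H_k$.
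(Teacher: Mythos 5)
Your plan has genuine gaps, and one of its stated steps is actually false. First, the reduction modulo $K$ is miscalibrated: even in the best case, where $A$ really is a union of full $K$-cosets together with two stray points, the image $\phi(A)$ has size $\left(7^{r-1}-1\right)/3+1=\chi(G/K,3)$, so by Proposition~\ref{3-critical prop} we have $3\phi(A)=G/K$ and in particular $0\in 3\phi(A)$ --- the condition you propose to ``check'' can never hold. The object to which the inductive hypothesis could apply is $\phi(A)$ with the image of the stray coset removed, and for that you must first know the two strays lie in $K$ itself; the paper gets this kind of control from the fact that \emph{every} subgroup $H$ satisfies $|A\cap H|=(|H|-1)/3$ (proved via the partition of $G\setminus\{0\}$ into pierced lines), an ingredient missing from your outline. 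Second, even with that repair the inductive statement is too weak for a bottom-up recursion: the theorem allows the bottom pair $\{a_0,a_0'\}$ to be arbitrary (for $r=1$, e.g.\ $\{a,-a\}$ satisfies $0\notin 3A$), whereas your pullback needs the bottom layer of the quotient set to be a ratio-$2$ pair $\{\bar a_1,2\bar a_1\}$ so that it lifts to $\{a_1,2a_1\}+H_1$. This is true, but only because a configuration $(\{b,-b\}+K)\cup\{\mbox{stray in }K\}$ already forces $0\in 3A$; that extra argument (or a strengthening of the induction) is not in your proposal. Third, the heart of the matter --- showing that the two incomplete $\ell$-progressions total $2$ points rather than $9$ (sizes $4+5$ or $3+6$ are consistent with $|A|\equiv 2 \bmod 7$), that they sit where you need them, and handling the case $|3A|=n-1$ where two distinct omitted elements are not available --- is exactly what you defer with ``I expect'' and ``I anticipate,'' so the proof is not actually carried out.

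For comparison, the paper argues top-down rather than bottom-up: it first proves $|A\cap H|=(|H|-1)/3$ for all subgroups $H$, settles $r=2$ by a Kneser argument on cosets combined with a double count of incidences with affine lines, and for $r\geq 3$ counts the cosets of rank-$(r-2)$ subgroups (``flats'') contained in $A$ to locate an index-$7$ subgroup $H$ with $A=(A\cap H)\cup(c+H)\cup(2c+H)$ (or $(4c+H)$), after which the inductive hypothesis is applied to $A\cap H$ inside $H$, where the hypotheses of the theorem hold verbatim. Peeling off the top layer is what lets the ratio-$2$ structure be proved directly at each stage, avoiding the junction problem your bottom-up scheme runs into.
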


\begin{proof}
First, recall that $\mathbb{Z}_7^r$ has exactly $(7^r-1)/6$ subgroups of index 7; indeed, identifying $\mathbb{Z}_7^r$ with the $r$-dimensional vector space over $\mathbb{Z}_7$, we note that each $(r-1)$-dimensional subspace corresponds to its normal vector that is unique up to nonzero scalar multiples.

Next, we prove that our conditions imply that for any subgroup $H$ of $G$ we have $|A\cap H|=(|H|-1)/3.$  Since by Proposition \ref{3-critical prop} we have $$\chi (H,3)=(|H|-1)/3+1,$$ we see that $H$ may contain at most $(|H|-1)/3$ elements of $A$, since otherwise $H \subseteq 3A$, contradicting $0
\not \in 3A$.  Therefore, we only need to prove that $H$ contains at least $(|H|-1)/3$ elements of $A$.  As $0 \not \in 3A$ implies that $0 \not \in A$, this trivially holds for $|H|=1$.  

For subgroups of order 7, we observe that the collection of {\em pierced lines} 
$$\{H \setminus \{0\}  \; : \; H \le G, |H|=7\}$$ forms a partition of $G \setminus \{0\}$.  Therefore, in order to have $|A|=(|G|-1)/3$, no pierced line, and thus no subgroup of order 7, may contain fewer than 2 elements of $A$.  Since for all subgroups $H$ of $G$, $H \setminus \{0\}$ is the disjoint union of pierced lines, our claim follows.

We are now ready to prove our theorem.  For $r=1$ there is nothing to prove.

We consider the case of $r=2$ next, and suppose that $A$ is a 16-element subset of $\mathbb{Z}_7^2$ such that $0 \not \in 3A$.  Note that if $H\leq \mathbb{Z}_7^2$ is of order 7, then at most two $H$-cosets can contain 3 or more elements from $A$. Suppose, to the contrary, that $H$-cosets $C_1, C_2,$ and $C_3$ each contain at least 3 elements from $A$.  Since $\chi (G/H,3)=\chi (\mathbb{Z}_7,3)=3$, we can then find (not necessarily distinct) indices $i,j,k \in \{1,2,3\}$ so that $C_i+C_j+C_k=H$.  Letting $A_i = A\cap C_i$, $A_j = A\cap C_j$, and $A_k = A\cap C_k$, Kneser's Theorem implies that 
$$|A_i + A_j + A_k| \geq |A_i|+|A_j|+|A_k| - 2|K|,$$ where $K$ is the stabilizer subgroup of $A_i+A_j+A_k$ in $H$.  Since $0 \not \in 3A$, here $A_i+A_j+A_k$ is a proper subset of $H$, and thus is aperiodic (that is, $K$ is trivial).  But then our inequality becomes $$ 6 \geq |A_i|+|A_j|+|A_k| - 2,$$ a contradiction. 

Next, we show that there is a subgroup $H$ of $G$ of order 7 so that one of its cosets contains at least 4 elements from $A$.  For the sake of contradiction, assume the contrary. Then for each $H$, out of the seven $H$-cosets, two contain 3 elements from $A$ and five contain 2 elements from $A$. Let us count the size of the following set in two different ways:
$$S:=\{(C, a, a')  \; : \; C \text{ is an affine line in }G;\ a, a' \in C \cap A; a\ne a'  \},$$ where by an {\em affine line} we mean a coset of a subgroup of order 7.  
On one hand, after arbitrarily choosing distinct elements $a$ and $a'$ from $A$, there exists a unique affine line $C$ through $a$ and $a'$, thus $|S|=|A|\cdot (|A|-1)=240$.

For a different count, we partition the 56 affine lines into 8 different {\em parallel classes} depending on which subgroup they correspond to.  According to our indirect assumption, for each such class, the numbers of elements of $A$ lying on the 7 parallel lines are $3, 3, 2, 2, 2, 2, 2$.  Therefore, for each class the number of suitable pairs $a, a'$ is $6+6+2+2+2+2+2=22$, yielding $|S|=8\cdot 22=176$, a contradiction.

Therefore, we may choose  a subgroup $H$ of order 7 in $G$ in such a way that at least one of its cosets contains at least 4 elements from $A$.  We choose an arbitrary element $c \in G \setminus H$, and let $C_i=ic+H$ for $i=0,\ldots,6$; we also set $A_i = C_i \cap A$.   
According to our considerations at the beginning of the proof, we have $|A_0|=2$, and we may assume that $|A_1| = \max \{ |A_i|\}$; by the previous reasoning, $|A_1| \geq 4$.

An argument similar to the one above using Kneser's Theorem yields that when there are (not necessarily distinct) indices $i, j, k \in \{0, \dots, 6\}$ for which $i+j+k \equiv 0$ mod 7, none of $A_i$, $A_j$, or $A_k$ is the emptyset, and $|A_i|+|A_j|+|A_k| \geq 9$, then $H \subseteq 3A$, contradicting $0 \not \in 3A$.  Therefore, we have $2|A_1|+|A_5| \leq 8$ if $A_5 \neq \emptyset$; since $|A_1| \geq 4$, this yields $A_5=\emptyset$.  Similarly, $|A_0|+|A_1|+|A_6| \leq 8$ if $A_6 \neq \emptyset$, and thus $|A_6| \leq \max \{0,6-|A_1|\}$; and $|A_1|+2|A_3| \leq 8$, and thus $|A_3| \leq 4 - |A_1|/2$.  Furthermore, we can easily see that $|A_2|+|A_4| \leq |A_1|$; indeed, if neither $A_2$ nor $A_4$ is empty, then this follows from $|A_2|+|A_4| \leq 8 - |A_1|$ since $|A_1| \geq 4$, and it holds trivially when one of $A_2$ or $A_4$ is empty, by our choice of $A_1$.  We thus have 
\begin{eqnarray*} 16=|A| & = & |A_0|+|A_1| + |A_3|+|A_5|+|A_6|  +(|A_2|+|A_4|) \\
& \leq & 2 + |A_1| + (4 - |A_1|/2) + 0 + \max \{0,6-|A_1|\} + |A_1|,
\end{eqnarray*} from which we get $|A_1|=7$.  But then our previous inequalities yield $A_3=A_6=\emptyset$ and $|A_2|+|A_4|=7$; the latter equality can only occur when one of $A_2$ or $A_4$ is empty and the other is a full coset.  

Note that $(C_0, C_1, C_2)$ and $(C_0,C_4,C_1)$ are both 3-term arithmetic progressions in $G/H$.  Let us now set $H_1=H$, $\{a_0, a_0'\}=A_0$, and $a_1=c$ or $a_1=4c$ depending on whether $A=A_0 \cup C_1 \cup C_2$ or $A=A_0 \cup C_1 \cup C_4$.  Then 
$$A=\{a_0,a_0'\} \cup \left(\{a_1, 2a_1\} + H_1 \right),$$ 
and thus our proof for the case of $r=2$ is complete.

We now use induction to prove that our result holds for any $r \geq 3$.  
To start, we examine cosets of subgroups of rank $r-2$ in $G$, which here we call flats; more specifically, we say that a coset of a subgroup $K$ of rank $r-2$ is a {\em flat of type $K$}.  We can count the number of flats fully contained in $A$ as follows.  Since none of them is a subgroup, each flat $F$ contained in $A$ generates a unique subgroup $\langle F \rangle$ of index 7.  There are $(7^r-1)/6$ subgroups of index 7 in $G$,  and by our inductive hypothesis, each such subgroup contains exactly two flats that are in $A$.  Therefore, $A$ contains exactly $(7^r-1)/3$ flats; we call these {\em $A$-flats}.      

We see that not all $A$-flats are of the same type: indeed, a subgroup of rank $r-2$ in $G$ has $49$ cosets, of which at most $48$ are in $A$, but $(7^r-1)/3$ is more than $48$ if $r \geq 3$.  Now let $F_1$ and $F_2$ be $A$-flats of types $K_1$ and $K_2$, respectively, with $K_1 \neq K_2$.  Then $H=K_1+K_2$ is a subgroup of index 7 in $G$, since $K_1+K_2=G$ would imply that $F_1+2F_2=G$, contradicting $3A \neq G$.  For the same reason, $H$ contains every subgroup of rank $r-2$ that has a flat in $A$.    

Now let $c \in G \setminus H$ be an arbitrary element; the cosets of $H$ in $G$ then are $C_i=ic+H$ as $i=0,1,\ldots, 6$.  Note that each $A$-flat is contained entirely in one of the seven cosets of $H$ in $G$; let ${\mathcal F}_i$ be the union of $A$-flats in $C_i$.     
By our inductive assumption, $H$ itself has exactly two $A$-flats, and they are of the same type.   However, there has to be at least one coset of $H$ that has at least two $A$-flats of different types, since we have more than $2+6 \cdot 7=44$ $A$-flats;   
without loss of generality, suppose that $C_1$ contains at least two different types of $A$-flats.  

Note that the sum of two flats of different types is an entire coset of $H$.  Therefore, ${\mathcal F}_6 = \emptyset$, since otherwise ${\mathcal F}_0 + {\mathcal F}_1 + {\mathcal F}_6=C_0$, contradicting $0 \not \in 3A$.  Similarly, from $1+3+3 \equiv 1+1+5 \equiv 1+2+4 \equiv  0$ mod 7, we get ${\mathcal F}_3 = {\mathcal F}_5 = \emptyset$ and that at least one of ${\mathcal F}_2$ or ${\mathcal F}_4$ is empty.  So either $C_0 \cup C_1 \cup C_2$ or $C_0 \cup C_1 \cup C_4$ contains all $A$-flats; since $C_0$ contains exactly 2, the other two cosets each have to contain the maximum possible number that they can, which is $7 \cdot (7^{r-1}-1)/6$.  But if a coset of $H$ contains 7 $A$-flats of the same type, then it is the disjoint union of these $A$-flats, so we must have $A=(A \cap H) \cup (c+H) \cup (2c+H)$ or $A=(A \cap H) \cup (c+H) \cup (4c+H)$.  This means that we can set $H_r=H$ and $a_r=c$ or $a_r=4c$, and then apply the inductive hypothesis within $H$.  This completes our proof.
\end{proof}

\begin{corollary}\label{cor-z7n} If $G$ is an elementary abelian 7-group, then $S(G,3)=\{n-3\}$.

\end{corollary}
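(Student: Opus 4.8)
The plan is to read the corollary straight off the structure theorem (Theorem~\ref{thm-Z7r}), after a short normalization and an induction on $r$. Write $n=7^r$ and $G=\mathbb{Z}_7^r$. Since every divisor of $n$ is a power of $7$, hence $\equiv 1 \pmod 3$, Proposition~\ref{3-critical prop} gives $\chi(G,3)=(n-1)/3+1$; thus a $3$-incomplete subset of maximum size has exactly $(n-1)/3$ elements, and $S(G,3)=\{\,|3A| : A\subset G,\ |A|=(n-1)/3,\ 3A\neq G\,\}$. I would first reduce to the hypothesis of Theorem~\ref{thm-Z7r}: given such an $A$, choose $g\notin 3A$; since $\gcd(3,7)=1$ the map $x\mapsto 3x$ is a bijection of $G$, so $g=3t$ for some $t$, and replacing $A$ by $A-t$ leaves $|A|$ and $|3A|$ unchanged while forcing $0\notin 3A$. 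Theorem~\ref{thm-Z7r} then supplies a chain $\{0\}=H_0<H_1<\cdots<H_r=G$ together with $a_0,a_0'\in H_1$ and $a_k\in H_{k+1}\setminus H_k$ such that $A=\{a_0,a_0'\}\cup\bigcup_{k=1}^{r-1}(\{a_k,2a_k\}+H_k)$.

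The core step is to show that every set $A$ of this form has $|3A|=n-3$, which I would prove by induction on $r$. For $r=1$, $A=\{a_0,a_0'\}$ with $a_0\neq a_0'$, so $3A=3a_0+\{0,1,2,3\}(a_0'-a_0)$ is a four-term arithmetic progression in $\mathbb{Z}_7$ with nonzero common difference, hence $|3A|=4=n-3$. For $r\geq 2$, split $A=A'\cup B$ with $A'=A\cap H_{r-1}=\{a_0,a_0'\}\cup\bigcup_{k=1}^{r-2}(\{a_k,2a_k\}+H_k)$ and $B=\{a_{r-1},2a_{r-1}\}+H_{r-1}$. Then $A'$ is again a set of the same form, now for $H_{r-1}\cong\mathbb{Z}_7^{r-1}$, and $0\notin 3A'\subseteq 3A$, so the induction hypothesis gives $|3A'|=7^{r-1}-3$. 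Writing $3A=3A'\cup(2A'+B)\cup(A'+2B)\cup 3B$ and using $A'\subseteq H_{r-1}$ (so the $H_{r-1}$-part of $B$ absorbs summands from $A'$), a short computation in $G/H_{r-1}\cong\mathbb{Z}_7$ shows that $2A'+B$, $A'+2B$, and $3B$ are, respectively, the unions of the cosets $ja_{r-1}+H_{r-1}$ for $j\in\{1,2\}$, $j\in\{2,3,4\}$, and $j\in\{3,4,5,6\}$; their union is all six nonzero cosets of $H_{r-1}$ in $G$, and this is disjoint from $3A'\subseteq H_{r-1}$. Hence $|3A|=|3A'|+6|H_{r-1}|=(7^{r-1}-3)+6\cdot 7^{r-1}=n-3$.

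To finish, I would note that $S(G,3)\neq\varnothing$: for instance the initial segment $\mathcal{I}(G,(n-1)/3)$ has all $q_k=2$, so Proposition~\ref{initial segments} gives $|3\mathcal{I}(G,(n-1)/3)|=n-3$. Combining this with the previous paragraph yields $S(G,3)=\{n-3\}$, which is the corollary. I expect the only real friction to lie in the bookkeeping of the inductive step --- checking exactly which cosets of $H_{r-1}$ occur in each of the four pieces of $3A$ --- but this is a routine finite verification in $\mathbb{Z}_7$, since Theorem~\ref{thm-Z7r} has already absorbed all of the structural difficulty.
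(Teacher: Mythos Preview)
Your proof is correct and follows essentially the same route as the paper's: normalize by a translation so that $0\notin 3A$, invoke Theorem~\ref{thm-Z7r}, and then compute $|3A|$ from the resulting layered structure. The paper compresses the last step into the single line $|3A| = 6\cdot 7^{r-1}+6\cdot 7^{r-2}+\cdots+6\cdot 7+4 = 7^r-3$, whereas you unpack it as an induction on $r$ peeling off the top layer $B=\{a_{r-1},2a_{r-1}\}+H_{r-1}$; these are the same computation organized slightly differently, and your coset bookkeeping in $G/H_{r-1}\cong\mathbb{Z}_7$ is accurate.
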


\begin{proof}  Let $A$ be a 3-incomplete subset of $G$ of size $(n-1)/3$.  After translating $A$, if needed, we may assume that $0\not\in 3A$; we can then use  Theorem~\ref{thm-Z7r} to show that $|3A|=n-3$.  Indeed, we find that if $n=7^r$, then 
$$|3A| = 6 \cdot 7^{r-1} + 6 \cdot 7^{r-2} + \cdots + 6 \cdot 7 + 6 - 2 =7^r-3.$$ 
\end{proof}

\section*{Acknowledgments}
P.~P.~P. was supported by the Lend\"ulet program of the Hungarian Academy of Sciences (MTA) and by the National Research, Development and Innovation Office NKFIH (Grant Nr. K124171 and K129335).

\end{document}